\documentclass[11pt]{article}
\usepackage{calc}
\usepackage{dbnsymb} %SIMBOLI NODI
\usepackage{xcolor} 
\usepackage{longtable}

\usepackage{graphicx}
\usepackage{amsmath,amsthm,amsfonts,latexsym} %,tikz}
\setlength{\textwidth}{15cm}
\setlength{\evensidemargin}{.3cm}
\setlength{\oddsidemargin}{.3cm}
\topmargin = -40pt
\textheight = 690pt

\begin{document}

\newtheorem*{theo}{Theorem}
\newtheorem*{pro} {Proposition}
\newtheorem*{cor} {Corollary}
\newtheorem*{lem} {Lemma}
\newtheorem{theorem}{Theorem}[section]
\newtheorem{corollary}[theorem]{Corollary}
\newtheorem{lemma}[theorem]{Lemma}
\newtheorem{proposition}[theorem]{Proposition}
\newtheorem{conjecture}[theorem]{Conjecture}
\newtheorem{question}{Question}

\theoremstyle{definition}
 \newtheorem{definition}[theorem]{Definition} 
  \newtheorem{example}[theorem]{Example}
   \newtheorem{remark}[theorem]{Remark}
   
\newcommand{\Naturali}{{\mathbb{N}}}
\newcommand{\Reali}{{\mathbb{R}}}
\newcommand{\Complessi}{{\mathbb{C}}}
\newcommand{\Toro}{{\mathbb{T}}}
\newcommand{\Relativi}{{\mathbb{Z}}}
\newcommand{\HH}{\mathfrak H}
\newcommand{\KK}{\mathfrak K}
\newcommand{\LL}{\mathfrak L}
\newcommand{\as}{\ast_{\sigma}}
\newcommand{\tn}{\vert\hspace{-.3mm}\vert\hspace{-.3mm}\vert}
\def\A{{\cal A}}
\def\B{{\cal B}}
\def\E{{\cal E}}
\def\F{{\cal F}}
\def\G{{\cal G}}
\def\H{{\cal H}}
\def\K{{\cal K}}
\def\L{{\cal L}}
\def\N{{\cal N}}
\def\M{{\mathcal M}}
\def\gM{{\frak M}}
\def\O{{\cal O}}
\def\P{{\cal P}}
\def\S{{\cal S}}
\def\T{{\mathcal T}}
\def\U{{\cal U}}
\def\V{{\mathcal V}}
\def\qed{\hfill$\square$}

\title{Graph polynomials and link invariants as positive type functions on Thompson's group $F$}

\author{Valeriano Aiello, Roberto Conti\\}
\date{\today}
\maketitle
\markboth{V. Aiello, R. Conti}{
}
\renewcommand{\sectionmark}[1]{}
\begin{abstract}
In a recent paper Jones introduced a correspondence between elements of the Thompson group $F$ and certain graphs/links. It follows from his work that several 
polynomial invariants of links, such as the Kauffman bracket, can be reinterpreted as  coefficients of certain unitary representations of $F$.  We give a somewhat different and elementary proof of this fact for the Kauffman bracket evaluated at certain roots of unity by means of a statistical mechanics model interpretation.  Moreover, by similar methods we show that, for some particular specializations of the variables, other familiar link invariants and graph polynomials, namely the number of $N$-colourings and the Tutte polynomial, can be viewed as  positive definite functions on $F$. 

\vskip 0.9cm
\noindent {\bf MSC 2010}: 43A35, 57M27, 05C31.
\smallskip
\noindent {\bf Keywords}: 
Thompson group, function of positive type, trees, knots, links, chromatic polynomial, Tutte polynomial, Rank polynomial, Kauffman bracket, link colourings, Fox colourings.
\end{abstract}

\newpage

\section{Introduction}
The Thompson group $F$ is a very 
fascinating mathematical object \cite{Cannon}. Despite the many various attempts,
after many years the question whether it is amenable remains unsettled (see e.g. \cite{Haagerup}). 
It is well known that the elements of $F$ admit some nice pictorial description in terms of pairs of rooted planar binary trees with the same number of leaves.
In a very recent paper \cite{Jo}, devoted to the search of connections between (the planar algebra description of) subfactors and the (vertex algebra description of) conformal field theory,
V. Jones gave some new insight on the mathematical structure of $F$ by considering it as a suitable replacement of the group of diffeomorphisms of the circle,
and examining its role in the possible constructions of conformal models. 
Proceeding along these lines,
and analyzing carefully the mathematical setup,
perhaps the most surprising conclusion in \cite{Jo} is that it is possible to take $F$ as a replacement of the
familiar braid groups in a description of knots and links, thus opening a new line of research
on the interplay between $F$ and low-dimensional topology. Indeed, one can associate to each element of $F$ a suitable graph and a related link diagram in a way that we briefly recall below. Especially, by exploiting this construction, 
in \cite{Jo} 
it is provided evidence that certain well-known polynomial invariants, suitably normalized, define positive definite functions of $F$
and thus give rise to unitary representations, at least for all the values of the relevant parameters corresponding to the discrete part of the Jones spectrum. This is further illustrated with  concrete examples by looking at the chromatic polynomial \cite[Proposition 5.2.1]{Jo}, the Kauffman bracket and the Jones polynomial (Section 5.3 in the aforesaid paper).

In the present paper we take up this point of view, although in a somewhat different fashion. 
We give an elementary proof 
of the 
fact that, up to a normalization, the evaluation of the Kauffman bracket at some specific roots of unity provides %again
a positive definite function on $F$ by exploiting the familiar connection between link invariants and statistical mechanical models. 
We also discuss along similar lines analogous statements for the Tutte polynomial and the $N$-colouring of links. Therefore, some particular specializations of these functions also yield 
unitary representations of the Thompson group $F$.

Finally, we observe that we only consider the case of invariants of unoriented link diagrams. The case of oriented links 
is partially treated in \cite{AC} by using similar statistical mechanics arguments and, in full generality, in \cite{ACJ} by using the recent machinery developed in \cite{Jo-No}.

\section{Preliminaries}

\subsection{The Thompson group $F$}
The Thompson group $F$ can be defined by the following finite presentation
$$
	\langle x_0, x_1 \ | \ x_2 x_1 = x_1 x_3, x_3 x_1 = x_1 x_4 \rangle , 
$$
where $x_n\doteq x_0^{1-n}x_1x_0^{n-1}$ for $n\geq 2$.
In an alternative picture, $F$ can be seen as a particular subgroup of the group of homeomorphisms of the interval $[0,1]$. Indeed, it is generated by the following homeomorphisms (see \cite{Cannon} for further details)
\begin{eqnarray*}
	x_0=\left\{\begin{array}{ll} 
 		2t  & 0\leq t\leq \frac{1}{4} \\[0.5ex]
 		t+\frac{1}{4} & \frac{1}{4} \leq t\leq \frac{1}{2} \\[0.5ex]
 		\frac{t}{2}+\frac{1}{2} & \frac{1}{2} \leq t\leq 1
 		\end{array}\right. 
 		\quad 
 		x_1=\left\{\begin{array}{ll} 
 		t  & 0\leq t\leq \frac{1}{2} \\[0.5ex]
 		2t-\frac{1}{2}  & \frac{1}{2} \leq t\leq \frac{5}{8} \\[0.5ex]
 		t+\frac{1}{8} & \frac{5}{8} \leq t\leq \frac{3}{4} \\[0.5ex]
 		\frac{t}{2}+\frac{1}{2} & \frac{3}{4} \leq t\leq 1
 		\end{array}\right.
\end{eqnarray*}
An equivalent description is the following, \cite{Belk}. One can define standard dyadic intervals, namely those whose endpoints are $\frac{k}{2^n}$ and $\frac{k+1}{2^n}$ for $n, k\in\mathbb{N}$. Any finite partition of the interval $[0,1]$ made with standard dyadic intervals is called a dyadic subdivision. Given two dyadic subdivisions $\mathcal{A}$ and $\mathcal{B}$ with the same cardinality, it is possible to define a homeomorphism $f_{\mathcal{A},\mathcal{B}}:[0,1]\to [0,1]$ which maps linearly  each interval of $\mathcal{A}$ onto the corresponding interval of $\mathcal{B}$. The maps $f_{\mathcal{A},\mathcal{B}}$ form the group  $F$.
This characterization of the Thompson group has the following graphical description. Set $\T \doteq \bigcup_n \T_n$ the space of rooted planar binary trees. With $T \in \T$, we denote by $\partial T = \{f_1,\ldots,f_n\}$ the set of leaves of $T$. Of course, $\T_n \doteq \{ T \in \T \ | \ |\partial T| = n \}$. Denote by $\T^2_\partial \doteq \T \times_\partial \T$ the set of matched pairs of trees $(T_+,T_-)$, i.e. such that $\partial T_+ = \partial T_-$.
We also say that any such pair $(T_+,T_-)$ is bifurcating. To any leaf of a binary tree it is associated a standard dyadic  interval (see \cite{Belk}), thus a pair of trees can be used to determine an element of $F$. Therefore, there is a map $\T^2_\partial \to F$, $(T_+,T_-) \mapsto g(T_+,T_-)$. Indeed, it is surjective but not injective. 
A cheap way to see this is to realize that any such pair with $T_+ = T_-$ gives rise to the identity element of $F$. Moreover, it holds $g(T_-,T_+) = g(T_+,T_-)^{-1}$ and if $g(T_+,T_-) = g(T'_+,T'_-)$ it is possible to connect the two pairs by a sequence of addition/deletion of opposite carets.

\subsection{Jones' correspondence between the Thompson group and links}
In this section we review Jones'  procedure that associates links to elements of $F$, \cite{Jo}. This can be described by the composition of four maps. Denote by $\G$ the set of signed planar oriented finite graphs. First of all, there is a specific function $\Gamma: \T^2_\partial \to \G$, $(T_+,T_-) \mapsto \Gamma(T_+,T_-)$. For any $\Gamma \in \G$ one has the associated medial graph $\Phi(\Gamma) \in \M$ (see \cite{Godsil}, p.398), that is a $4$-regular finite graph. Finally, there is a map $L: \M \to \L$, where $\L$ denotes the set of unoriented link diagrams.
By composition, get a map $\T^2_\partial \to \L$ given by 
$$(T_+,T_-) \mapsto L(\Phi(\Gamma(T_+,T_-))) \ , $$
also denoted $L(T_+,T_-)$ for simplicity. Every element in the group $F$ has a unique reduced,
that is without opposing carets, tree diagram (see \cite{Belk}, p. 6). By this result, there is 
a canonical section $F \to \T^2_\partial$ and by the above this result provides unambiguously a map
$$F \to \L.$$
We denote it simply by $F \ni g \mapsto L_g \in \L$.

\begin{example} \label{ese-omega2}
We now show an example of the above procedure with the trefoil (cf. \emph{\rm Rem.} 5.3.3, \cite{Jo}).
Let $\omega^2=(x_1x_0^{-1})^2\in F$ be the element described by the following pair of trees, with associated oriented graph $\Gamma(\omega^2)$,  and its associated medial graph $\Phi(\Gamma(\omega^2))$ 
$$
\includegraphics[scale=.5]{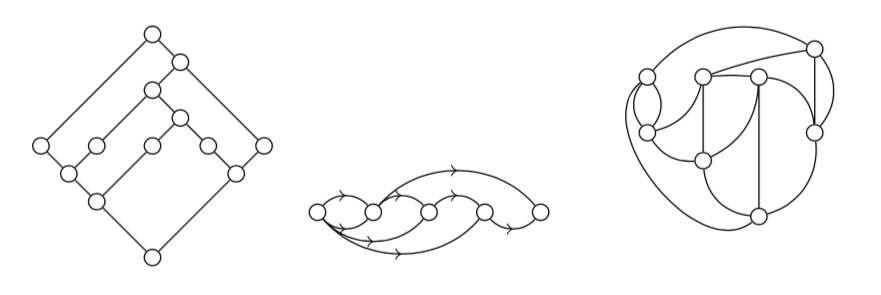}
$$

The associated link diagram is obtained from  $\Phi(\Gamma(\omega^2))$ by putting a crossing of type $\slashoverback$ over the  four vertices corresponding to the upper tree, and a crossing of type  $\backoverslash$ on the remaining vertices, as follows:
$$
\includegraphics[scale=.3]{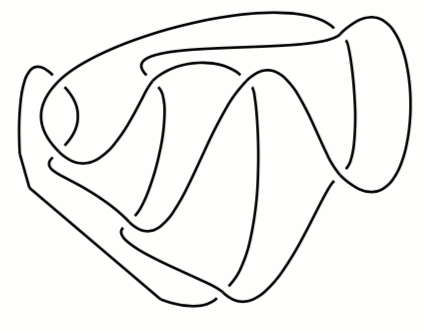}
$$
It is easy to see that the diagram $L_{\omega^2}$ obtained in this way represents a trefoil.
\end{example}

\noindent 
The signed graph $\Gamma(T_+,T_-)$ is obtained by gluing along the common boundary the
two graphs $\Gamma_+(T_+)$ and $\Gamma_-(T_-)$.
The edges of $\Gamma_+(T_+)$ are signed with + and those of $\Gamma_-(T_-)$ with - \ , respectively . 

\begin{remark}
	(\cite{Jo}, p. 19-20) The graphs $\Gamma_+(T_+)$ and $\Gamma_-(T_-)$ are rooted trees (not bifurcating in general). Let $\Psi$ be a rooted tree, then there exists a bifurcating tree $T_\pm$ such that $\Psi=\Gamma_\pm(T_\pm)$ (see {\it loc.cit.}, \emph{Lemma} 4.1.1.  for a proof). Thus, giving a pair of matched trees $(T_+,T_-)$ is equivalent to giving the graph $\Gamma(T_+,T_-)$.
\end{remark}

\begin{remark}
Actually $\Gamma_+(T_+)$ is nothing but a subgraph of the dual of the tree $T_+$ considered as a planar graph in the upper half-plane with the leaves on the boundary. It is the subgraph generated by the edges corresponding to the north-east edges of $T_+$. Similarly, $\Gamma_-(T_-)$ is a subtree of the dual of $T_-$ in the lower half-plane. 
\end{remark}

\begin{proposition} \label{link-inverse-element}
For any $g \in F$, it holds
$$L_{g^{-1}} = (L_g)^* \ , $$
where $^*$ denotes the mirror image link.
\end{proposition}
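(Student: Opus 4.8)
The plan is to unwind the four‑step construction $g \mapsto L_g = L(\Phi(\Gamma(T_+,T_-)))$ and track what happens at each stage when one replaces $g$ by $g^{-1}$. The starting observation is the one already recorded in the excerpt: if $g = g(T_+,T_-)$ then $g^{-1} = g(T_-,T_+)$, i.e.\ passing to the inverse simply swaps the roles of the two trees in the matched pair. So the whole proof reduces to comparing $L(\Phi(\Gamma(T_+,T_-)))$ with $L(\Phi(\Gamma(T_-,T_+)))$ and checking that the second diagram is the mirror image of the first.

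First I would analyze the effect of the swap on the signed planar graph $\Gamma$. By the construction recalled after Example~\ref{ese-omega2}, $\Gamma(T_+,T_-)$ is obtained by gluing $\Gamma_+(T_+)$ (a subtree of the dual of $T_+$ living in the upper half‑plane) to $\Gamma_-(T_-)$ (a subtree of the dual of $T_-$ living in the lower half‑plane) along the common boundary line. Swapping $T_+$ and $T_-$ therefore reflects the whole picture across the horizontal boundary: $\Gamma(T_-,T_+)$ is the mirror image (across the boundary line) of $\Gamma(T_+,T_-)$, with the roles of the ``$+$'' part and the ``$-$'' part interchanged. I would make precise here exactly what data the sign carries and confirm that the reflection exchanges the two sign classes (upper‑tree vertices vs.\ lower‑tree vertices) but does not otherwise disturb the graph.

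Next I would check that the medial graph operation $\Phi$ is natural with respect to this reflection: the medial graph of a reflected planar graph is the reflection of the medial graph, so $\Phi(\Gamma(T_-,T_+))$ is the reflection of $\Phi(\Gamma(T_+,T_-))$, again with the two classes of vertices swapped. Finally I would examine the map $L \colon \M \to \L$. Recall from Example~\ref{ese-omega2} that $L$ turns a $4$‑regular medial graph into a link diagram by placing an overcrossing $\slashoverback$ at each vertex coming from the upper tree and an undercrossing $\backoverslash$ at each vertex coming from the lower tree. In $L(\Phi(\Gamma(T_-,T_+)))$ the ambient picture is reflected \emph{and} the assignment of $\slashoverback$ vs.\ $\backoverslash$ is swapped; I would verify that these two effects, combined, are exactly the operation of switching every crossing, i.e.\ taking the mirror image link $(L_g)^*$. (One has to be a little careful: a reflection of the plane by itself already switches over/under at every crossing, so swapping the labels on top of that could naively look like it cancels; the point is that the labels are attached to \emph{vertices of the graph}, which get carried along by the reflection, so the net effect is a genuine crossing switch. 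Making this bookkeeping airtight is the step I expect to be the main obstacle.) Once this is established, $L_{g^{-1}} = L(\Phi(\Gamma(T_-,T_+))) = (L(\Phi(\Gamma(T_+,T_-))))^* = (L_g)^*$, which is the claim. I would likely also remark that passing from the (possibly non‑reduced) pair $(T_+,T_-)$ to the canonical reduced representative commutes with the swap, so that using the canonical section $F \to \T^2_\partial$ causes no trouble.
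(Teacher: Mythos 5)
Your proposal is correct and follows essentially the same route as the paper's proof: both use that $g^{-1}$ corresponds to the swapped pair $(T_-,T_+)$, that the crossing conventions attached to the upper and lower trees are thereby interchanged, and that flipping the picture across the horizontal axis then identifies $L_{g^{-1}}$ with the mirror image of $L_g$. The paper compresses the final step into ``rotate the diagram about the $x$-axis,'' whereas you spell out the reflection/crossing-switch bookkeeping in more detail, but the argument is the same.
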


\begin{proof}
If we associate to $g$ the pair of trees $(T_+,T_-)$, then we associate to $g^{-1}$ the pair $(T_-,T_+)$. When we consider the medial graph of $\Gamma_-(T_-)$ (for $g$) we used the crossing $\backoverslash$. When we consider the medial graph of $\Gamma_+(T_-)$ (for $g^{-1}$) we use the crossing $\slashoverback$. If we rotate the link $L_{g^{-1}}$ about the x-axis we recognise the mirror image of the original link. 
\end{proof}

We record a simple but useful result about cancelling carets for later reference. Since this fact is already mentioned in \cite[p.19]{Jo} we leave the proof to the reader.
\begin{proposition}  \label{add-opp-car}
Let $(T_+,T_-)$ be a pair of bifurcating trees, and consider another such pair $(T'_+,T'_-)$ obtained 
by adding an opposing pair of carets. Then 
\begin{enumerate}
\item $\Gamma(T'_+,T'_-)$ differs from $\Gamma(T_+,T_-)$ by the addition of a new $2-$valent vertex only connected to a vertex of $\Gamma(T_+,T_-)$ on its left;
\item $L(T'_+,T'_-) = L(T_+,T_-) \cup O$ (addition of a distant unknot).
\end{enumerate}
\end{proposition}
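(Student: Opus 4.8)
The plan is to verify both assertions by tracking exactly what an opposing pair of carets does to the tree diagram, and then pushing that local modification through the three maps $\Gamma$, $\Phi$, and $L$ in turn. Recall that adding an opposing pair of carets means choosing a leaf $f_i$ (the $i$-th leaf) and replacing it by a caret in both $T_+$ and $T_-$ simultaneously, so that $(T'_+,T'_-)$ still represents the same element $g\in F$. By the two Remarks above, $\Gamma_\pm$ is (a subgraph of) the planar dual of the tree restricted to the north-east (resp. south-east) edges, so the first step is to see what effect attaching a caret at leaf $f_i$ has on this dual subgraph.

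For part (1), I would argue as follows. When a caret is grafted onto leaf $f_i$ of $T_+$, the new tree $T'_+$ has one new internal vertex and two new leaves in place of $f_i$; in the planar-dual picture in the upper half-plane this introduces exactly one new edge, namely the dual of the newly created north-east edge of the caret, and this edge connects a new vertex to the vertex of $\Gamma_+(T_+)$ lying immediately to its left (the old face is split into two by the caret, and the new face created on the left of the new north-east edge gives the new $2$-valent vertex). The grafting onto $T_-$ in the lower half-plane produces the mirror modification, but by the planarity conventions the south-east caret edge in $T_-$ contributes no new edge to $\Gamma_-$ once one identifies along the common boundary — I would spell out precisely which caret edges are north-east and which are south-east using the picture conventions of \cite{Jo}, concluding that the only net change to the glued graph $\Gamma(T_+,T_-)$ is a single pendant vertex attached on the left as claimed. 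So part (1) reduces to a careful bookkeeping of planar duals of carets, with the orientation/left-right convention being the only subtle point.

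For part (2), I would feed the conclusion of part (1) into the medial-graph construction. A new $2$-valent vertex $v$ of $\Gamma$, joined to the rest of $\Gamma$ by a single edge $e$, contributes to the medial graph $\Phi(\Gamma)$ a small cycle: the two edges incident to $v$ (here the edge $e$ together with the ``outer'' half-edge making $v$ formally $2$-valent, or more simply, a $2$-valent vertex with a single loop-like contribution) produce in the medial graph a single $2$-gon, i.e. a lone crossing-free bigon sitting in a face by itself, connected to the rest of $\Phi(\Gamma)$ by nothing — it is a separate component. Applying $L$ (putting the appropriate crossing on this vertex according to whether it came from $T_+$ or $T_-$), this isolated $2$-valent vertex yields a $1$-crossing diagram on a single circle, which is a Reidemeister-I kink, hence an unknot $O$ split off from the rest. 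Therefore $L(T'_+,T'_-)=L(T_+,T_-)\cup O$.

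The main obstacle I anticipate is \emph{not} the topology but the combinatorial geometry of step (1): getting the planar embedding conventions exactly right so that the new vertex is genuinely $2$-valent and genuinely attached ``on the left,'' rather than, say, subdividing an existing edge of $\Gamma$ or attaching on the wrong side. This is precisely the kind of convention-dependent detail that \cite[p.19]{Jo} states without proof, so I would either reproduce the relevant figure or give a completely explicit description of the north-east/south-east edge labelling of a caret and of how $\Gamma_+$ and $\Gamma_-$ are glued, and then the rest (the medial graph of a pendant $2$-valent vertex, and the Reidemeister-I identification) is routine. Given that the excerpt explicitly says ``we leave the proof to the reader,'' a proof proposal at this level of detail is appropriate; a full writeup would simply make the caret bookkeeping in step (1) pictorial.
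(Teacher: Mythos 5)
The paper itself gives no proof of this proposition (it is left to the reader, with a pointer to \cite[p.~19]{Jo}), so your proposal has to be measured against what the statement says and how the paper later uses it --- and by that measure it contains a genuine error at its core. You assert that only the caret grafted onto $T_+$ contributes a new dual edge, while the caret grafted onto $T_-$ ``contributes no new edge to $\Gamma_-$'', so that the net change is a pendant vertex attached by a single edge. This cannot be right: $\Gamma_+(T'_+)$ and $\Gamma_-(T'_-)$ are each spanning trees on the $n+1$ vertices of the new graph, so \emph{each} must gain exactly one edge when a leaf is split. Concretely, the new vertex is the new region between the two new leaves, and it receives one positive edge (dual to the north-east edge of the new caret in $T_+$, drawn in the upper half-plane) and one negative edge (dual to the mirror edge of the new caret in $T_-$, drawn in the lower half-plane), both ending at the same old vertex immediately to its left. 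That is what ``$2$-valent vertex only connected to a vertex of $\Gamma(T_+,T_-)$ on its left'' means, and it is exactly how the paper uses the proposition afterwards: in the Tutte and Kauffman lemmas the difference is described as ``a pair of parallel edges $e_+$, $e_-$ attached only to a vertex''. Your write-up is in fact internally inconsistent here: a vertex joined to the rest by a single edge is $1$-valent, and the ``outer half-edge'' you invoke has no meaning in this construction.

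Because of this, your argument for part (2) also fails. The added configuration carries \emph{two} crossings (one for each of $e_+,e_-$), not one; the corresponding medial vertices are joined to the rest of the medial graph, so there is no isolated bigon component and no Reidemeister~1 kink. The correct mechanism is that the two parallel edges of opposite sign produce a small closed strand encircling the new region, which meets one strand of the old diagram in two crossings of opposite checkerboard sign; these cancel by a Reidemeister~2 move, after which the small circle is a distant unknot and what remains is $L(T_+,T_-)$. A useful sanity check shows your version of (1) cannot stand: a pendant edge corresponds to a Reidemeister~1 curl, which would multiply the Kauffman bracket by $-A^{\pm 3}$ instead of $-A^2-A^{-2}$, would give the factor $x$ (bridge) rather than $x+y$ in the Tutte lemma, and would leave ${\rm Col}_Q$ unchanged rather than multiplied by $Q$ --- contradicting all three lemmas the paper derives from this proposition. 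So the caret bookkeeping you postponed is not a convention-dependent detail to be checked later; it is precisely where the proof lives, and as proposed it is carried out incorrectly.
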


\section{Positive type functions and polynomial link invariants, graph polynomials}
The aim of this paper is to provide an answer to the following question: for which (polynomial) invariants of unoriented links (or graph polynomial) $P$ is the function $F \ni g \to P(L_g)$ (or $F \ni g \to P(\Gamma(g))$) of positive type? 
We recall the definition of positive type function on a discrete group $G$.
\begin{definition}
A function $\phi: G\rightarrow
\mathbb{C}$ is said {\it  of positive type}  (or {\it positive definite}) if one of the following equivalent conditions hold
\begin{itemize}
\item[1)] $\sum_{i, j= 1,\ldots , r} a_i\overline{a_j}\phi (g_ig_j^{-1})\geq 0$ for any $r$ and $g_1,\ldots,g_r\in G$, $a_1,\ldots,a_r\in \mathbb{C}$;
\item[2)] the matrix $(\phi(g_ig_j^{-1}))_{i,j=1,\cdots , r}\geq 0$  (i.e. it is positive semidefinite) for any $r$ and $g_1,\ldots,g_r\in G$. 
\end{itemize}
\end{definition}

\begin{remark} \label{fpt}
	The polynomial invariant must satisfy the following conditions (see \cite{F}  for more details):
	\begin{enumerate}
		\item $P(e)\in \mathbb{R}^+$: it is enough consider the definition with $r=1$.
		\item $P(e)\geq |P(g)|$  $\forall g\in G$: it is enough to consider the definition with $r=2$, $g_1=e$ and $g_2=g$. The property follows because the matrix is positive definite and the determinant is positive. 
		\item $P(x^{-1})=\overline{P(x)}$: it is enough to consider the definition with $n=2$, the property follows becuase the matrix is selfadjoint.
	\end{enumerate}
\end{remark}

We give an example of a graph polynomial that is positive definite function on $F$, namely the (normalized) chromatic polynomial. As a consequence of \cite[Proposition 5.2.1.]{Jo}, Jones shows  that 
$(Q-1)^{-n} {\rm Chr}_{\Gamma(T_+,T_-)}(Q)$ is a function of positive type on $F$, for all values of $Q$, using the theory of planar algebras. 
We recall the definition of the chromatic polynomial and present a detailed proof of this fact when $Q\in \mathbb{N}$.
The method employed here will be useful later, when considering other invariants.\\
Let $G$ be a graph. 
The chromatic polynomial of $G$ is the polynomial ${\rm Chr}(G,t)$ such that, for any $Q \in {\mathbb N}$,
${\rm Chr}(G,Q)$ is the number of proper colourings of the vertices of $G$ with $Q$ colours.
Moreover, if $e\in E(G)$, it satisfies the following condition
\begin{displaymath}
    {\rm Chr}(G,Q)=\left\{\begin{array}{ll} 
 		0  & \textrm{ if $e$ is a loop } \\
 		{\rm Chr}(G-e, Q)- {\rm Chr}(G/e,Q) &  \textrm{ otherwise.}\\
 		\end{array}\right.
\end{displaymath}
\begin{example}
 Consider the following $\Gamma$-graphs 
$$
\includegraphics[scale=.4]{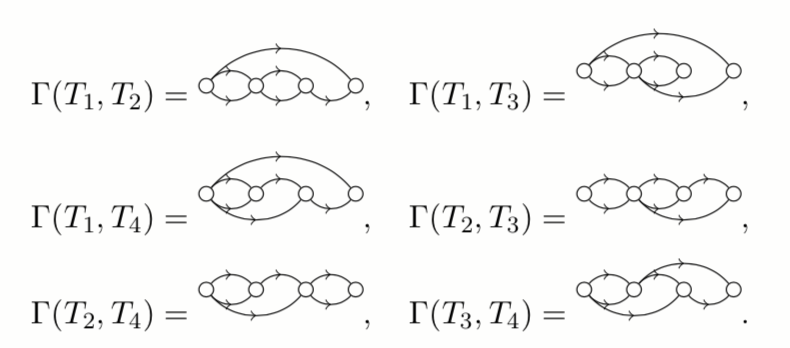}
$$
When $Q=4$, the chromatic polynomial takes the following values on the above graphs: 
\begin{itemize}
%	\item ${\rm Chr}(\Gamma(T_1,T_2))=Q(Q-1)(Q-1)+Q(Q-1)(Q-2)(Q-2)=36+48=84$,
	\item ${\rm Chr}(\Gamma(T_1,T_2))=Q(Q-1)(Q-1)(Q-1)-Q(Q-1)(Q-2)=36+48=84$,	
	\item ${\rm Chr}(\Gamma(T_1,T_3))=Q(Q-1)(Q-1)(Q-2)=72$,
%	\item ${\rm Chr}(\Gamma(T_1,T_4))=Q(Q-1)(Q-1)+Q(Q-1)(Q-2)(Q-2)=84$
	\item ${\rm Chr}(\Gamma(T_1,T_4))=Q(Q-1)(Q-1)(Q-2)-Q(Q-1)(Q-2)=48$
	\item ${\rm Chr}(\Gamma(T_2,T_3))=Q(Q-1)(Q-1)(Q-2)=72$
	\item ${\rm Chr}(\Gamma(T_2,T_4))=Q(Q-1)(Q-1)(Q-2)=72$
	\item ${\rm Chr}(\Gamma(T_3,T_4))=Q(Q-1)(Q-2)(Q-2)=48$
\end{itemize}
The associated matrix is
\begin{displaymath}
    A=\frac{1}{27}\left(\begin{array}{cccc} 
 		108 & 84 & 72 & 48 \\
 		84 & 108 & 72 & 72 \\
 		72 & 72 & 108 & 48 \\
 		48 & 72 & 48 & 108
 	\end{array}\right).
\end{displaymath}
\end{example}

Let $g\in F$ and suppose that $g=g(T_+,T_-)$ for some $T_+, T_-\in \T_n$. Then we can consider the well defined function on $F$: ${\rm Chr}(g,Q)={\rm Chr}(\Gamma(T_+,T_-),Q)(Q-1)^{-n+1)}$.
Let $g_1, \ldots, g_r \in F$ and let $g_i = g(T^i_+,T^i_-)$ (not necessarily in reduced form).
Without loss of generality, inserting pairs of opposing carets whenever necessary, we can assume that $T^i_- = T^j_-$, for all $i,j=1,\ldots,r$. Then $g_i g_j^{-1} = g(T^i_+,T^j_+)$ and
in order to prove that ${\rm Chr}(g,Q)$ is of positive type, we have to consider the symmetric matrix 
$$\Big({\rm Chr}(\Gamma(T^i_+,T^j_+),Q)/(Q-1)^{n-1}\Big)_{i,j=1}^r \ . $$ 
Recall that both $\Gamma_+(T_+)$ and $\Gamma_-(T_-)$ are rooted trees with $n$ vertices, and that for any such tree the chromatic polynomial evaluated at $Q$ takes the value $Q(Q-1)^{n-1}$.
It is clear that, for the diagonal terms, 
${\rm Chr}(\Gamma(T^i_+,T^i_+),Q) = {\rm Chr}(\Gamma_+(T^i_+),Q) = Q(Q-1)^{n-1}$.
Before proceeding with the proof, we introduce some notations and preliminary results. Consider two trees $T_i, T_j\in\T_n$. We denote the set of colouring of $\Gamma_+(T_i)$ and $\Gamma(T_i,T_j)$  by ${\rm Col}(T_i)$ and ${\rm Col}(T_i,T_j)$, respectively. Any colouring $\alpha\in {\rm Col}(T_i)$ can be described by an array, which we denote by the same symbol, $\alpha=(i_1,i_2,\cdots,i_n)$, with $i_k\in \{1,\cdots, Q\}$. Consider the Hilbert space $\H_n=\bigotimes_{i=1}^n \mathbb{C}^Q$. To any $\alpha\in {\rm Col}(T_i)$ we can associate the vector $v_\alpha=e_{i_1}\otimes\cdots \otimes e_{i_n}\in \H_n$. We also define the vector $v_{T_i} = \sum_{\alpha\in {\rm Col}(T_i)} v_\alpha \in \H_n$. 
Let $T_i, T_j\in\T_n$. As consequence of the orthogonality of the vectors $v_\alpha$, then the following equality holds
$$
	{\rm Chr}(\Gamma(T_i,T_j))=\langle v_{T_i}, v_{T_j} \rangle 
= \left\langle \sum_{\alpha\in {\rm Col}(T_i)} v_\alpha, \sum_{\beta\in {\rm Col}(T_j)} v_\beta\right\rangle.
$$ 
In fact, the right hand side of the formula counts the number of colourings that are valid for both $\Gamma_+(T_i)$ and $\Gamma_-(T_j)$, i.e. the colourings of $\Gamma(T_i,T_j)$.
\begin{lemma} (\cite{Bhatia}, p. 2, 3)\label{inner-prod-matrix}
	$A\in M_r(\mathbb{C})$ is positive semidefinite if and only if there exist a Hilbert space $H$ and vectors $v_1,\cdots, v_r\in H$ such that
	$$
		(a_{ij})=(\langle v_i,v_j\rangle).
	$$
\end{lemma}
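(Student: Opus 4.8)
The plan is to prove both implications of this classical characterisation of positive semidefinite matrices as Gram matrices, working throughout in the finite-dimensional Hilbert space $H=\mathbb{C}^r$ with its standard inner product (so no genuinely infinite-dimensional $H$ is ever needed, although the statement allows one), and with the convention that the inner product is linear in the first variable, as in the computations preceding the lemma.

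For the easy direction, assume $a_{ij}=\langle v_i,v_j\rangle$ for some vectors $v_1,\dots,v_r$ in a Hilbert space $H$. Then for any scalars $a_1,\dots,a_r\in\mathbb{C}$ one has
$$\sum_{i,j} a_i\overline{a_j}\,a_{ij}=\Big\langle \sum_i a_i v_i,\ \sum_j a_j v_j\Big\rangle=\Big\|\sum_i a_i v_i\Big\|^2\ge 0,$$
which is exactly condition (1) in the definition of positive type applied to the matrix $A$; hence $A\ge 0$. This is precisely the argument implicit in the passage above that produces the vectors $v_{T_i}$ and identifies $\mathrm{Chr}(\Gamma(T_i,T_j))$ with $\langle v_{T_i},v_{T_j}\rangle$.

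For the converse, suppose $A\ge 0$. By the spectral theorem $A$ admits a positive semidefinite square root $B\doteq A^{1/2}\in M_r(\mathbb{C})$, which is in particular Hermitian, so $BB^*=A^{1/2}A^{1/2}=A$. Let $v_i\in\mathbb{C}^r$ be the $i$-th row of $B$. A one-line computation with the standard inner product on $\mathbb{C}^r$ gives
$$\langle v_i,v_j\rangle=\sum_k B_{ik}\overline{B_{jk}}=(BB^*)_{ij}=a_{ij},$$
which is the required factorisation.

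The only point requiring attention is the bookkeeping with the inner-product convention (linear versus conjugate-linear in the first variable) and the corresponding choice of rows versus columns of $B$; since $A=A^*$ whenever $A\ge 0$, these choices merely transpose or conjugate entries and are harmless. So there is no real obstacle here: the statement is elementary linear algebra, and one could equally replace the square root $B$ by a Cholesky factorisation $A=LL^*$ or by applying Gram--Schmidt to any factorisation of $A$.
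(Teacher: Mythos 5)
Your proof is correct and complete: the Gram-matrix direction via $\sum_{i,j}a_i\overline{a_j}\langle v_i,v_j\rangle=\|\sum_i a_iv_i\|^2\ge 0$ and the converse via the Hermitian square root $B=A^{1/2}$ with $v_i$ the $i$-th row of $B$ is exactly the standard argument. The paper itself offers no proof of this lemma --- it is quoted from Bhatia (pp.\ 2--3), where essentially this same factorisation argument appears --- so your write-up simply supplies the classical proof the authors chose to cite, with the inner-product and row/column conventions handled correctly.
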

By the above discussion we have that the function ${\rm Chr}(g,Q)$ is a function of positive type on $F$.

\section{The Tutte polynomial}
We briefly recall some definitions, the interested reader is referred to \cite{Bollobas} for further details.
Let $G$ be a graph. We denote the vertices by $V(G)$ and the edges by $E(G)$, or for simplicity by $V$ and $E$, respectively. The Tutte polynomial may be defined as
$$
	T_G(x,y)=\sum_{S\subset E(G)} (x-1)^{c(S)-c(E(G))}(y-1)^{c(S)+|S|-|V(G)|}
$$
where $c(S)$ denotes the number of connected components of the graph generated by $S$. The Tutte polynomial satisfies the following deletion and contraction rule
\begin{displaymath}
	T_G=\left\{
 		\begin{array}{ll}
 		xT_{G-e}=xT_{G/e} & \textrm{ if $e$ is a bridge} \\ 
 		yT_{G/e} & \textrm{ if $e$ is a loop} \\ 
 		T_{G-e}+T_{G/e} & \textrm{ otherwise.} \\ 
 		\end{array}
 		\right.
\end{displaymath}
We want to define a function on the Thompson group $F$ using the Tutte polynomial. First of all we need the following lemma
\begin{lemma}
Consider $T_1, T_2\in\T_n$ and let $T'_1, T'_2\in\T_{n+1}$  be the threes obtained by the addition of a pair of opposite carets. Then the following equality holds
$$
	T_{\Gamma(T'_1,T'_2)}(x,y)=T_{\Gamma(T_1,T_2)}(x,y) (x+y).
$$
\end{lemma}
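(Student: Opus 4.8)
The plan is to reduce the claim to a purely graph-theoretic statement about the Tutte polynomial and then invoke Proposition~\ref{add-opp-car}. By that proposition, when we pass from $(T_1,T_2)$ to $(T_1',T_2')$ by adding an opposing pair of carets, the graph $\Gamma(T_1',T_2')$ is obtained from $\Gamma(T_1,T_2)$ by adjoining a single new vertex $w$ joined by exactly one edge $e$ to an existing vertex of $\Gamma(T_1,T_2)$; in particular $w$ is a leaf (degree-one vertex) and $e$ is a bridge of the enlarged graph. So the whole lemma amounts to: if $G'$ is obtained from $G$ by adding a pendant edge, then $T_{G'}(x,y) = x\, T_G(x,y)$. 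But wait — the stated formula has factor $(x+y)$, not $x$; I would need to re-examine whether the caret addition actually produces a pendant \emph{edge} or a pendant \emph{loop-plus-edge} configuration, or whether the relevant graph here carries a loop. Let me reconsider.

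Looking again at Proposition~\ref{add-opp-car}(1), the new vertex is $2$-valent, not $1$-valent — "a new $2$-valent vertex only connected to a vertex of $\Gamma(T_+,T_-)$ on its left". A $2$-valent vertex connected to a single other vertex must be connected to it by a double edge, equivalently: the new vertex $w$ and the edge structure form a configuration where contracting one of the two parallel edges creates a loop. So the correct local picture is: $G' = G$ together with a new vertex $w$ and \emph{two} parallel edges $e_1, e_2$ joining $w$ to a fixed vertex $v$ of $G$. Then the plan is: apply the deletion–contraction rule to $e_1$. Deleting $e_1$ leaves $G' - e_1$, in which $e_2$ is now a bridge (pendant edge to the leaf $w$), so $T_{G'-e_1} = x\, T_{(G'-e_1)/e_2} = x\, T_G$, since contracting the pendant edge $e_2$ just identifies $w$ with $v$ and recovers $G$. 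Contracting $e_1$ instead identifies $w$ with $v$, turning $e_2$ into a loop at $v$ in $G'/e_1$; by the loop rule $T_{G'/e_1} = y\, T_{(G'/e_1) - \text{loop}} = y\, T_G$. Adding these, $T_{G'} = T_{G'-e_1} + T_{G'/e_1} = x\, T_G + y\, T_G = (x+y)\, T_G$, as claimed.

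So concretely the steps I would carry out, in order, are: (i) invoke Proposition~\ref{add-opp-car}(1) to record the precise local modification — a new $2$-valent vertex $w$ attached to some vertex $v$ of $\Gamma(T_1,T_2)$ by a pair of parallel edges; (ii) name the two parallel edges $e_1,e_2$ and apply deletion–contraction of the Tutte polynomial to $e_1$; (iii) analyze $G' - e_1$: here $e_2$ has become a bridge, so the bridge rule gives $T_{G'-e_1}(x,y) = x\,T_{\Gamma(T_1,T_2)}(x,y)$ after contracting the now-pendant edge and identifying the resulting graph with $\Gamma(T_1,T_2)$; (iv) analyze $G'/e_1$: here $e_2$ has become a loop at $v$, so the loop rule gives $T_{G'/e_1}(x,y) = y\,T_{\Gamma(T_1,T_2)}(x,y)$ after deleting the loop; (v) sum to obtain the factor $x+y$. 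Each of (iii) and (iv) needs the small combinatorial observation that deleting a pendant edge (resp. a loop) and then forgetting the now-isolated vertex (resp. doing nothing) yields a graph Tutte-equivalent to $\Gamma(T_1,T_2)$; this is immediate since the Tutte polynomial is unchanged by adding isolated vertices.

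The only genuine obstacle I anticipate is pinning down the local picture of $\Gamma(T_1',T_2')$ precisely enough to be sure it is a double edge to a single vertex (so that $e_2$ becomes a loop under contraction of $e_1$) rather than, say, a path of length two to two distinct vertices of $\Gamma(T_1,T_2)$ — the latter would give the wrong answer. This is exactly the content of Proposition~\ref{add-opp-car}(1) (\emph{"only connected to a vertex of $\Gamma(T_+,T_-)$ on its left"}), so once that is read correctly the rest is a three-line application of deletion–contraction; no serious calculation is involved. If desired one could cross-check the final formula against the known relation between the Tutte polynomial and the Kauffman bracket together with Proposition~\ref{add-opp-car}(2) (addition of a distant unknot multiplies the Kauffman bracket by $-A^2-A^{-2}$), which is consistent with the $x+y$ factor under the standard substitution.
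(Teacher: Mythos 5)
Your argument is correct and coincides with the paper's own proof: both read Proposition~\ref{add-opp-car}(1) as saying the new $2$-valent vertex is joined to a single existing vertex by a pair of parallel edges, then apply deletion--contraction to one of them, obtaining $x\,T_{\Gamma(T_1,T_2)}$ from the bridge case and $y\,T_{\Gamma(T_1,T_2)}$ from the loop case. The only difference is your initial (self-corrected) detour through the pendant-edge reading; the final argument is the same three-line deletion--contraction computation as in the paper.
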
 
\begin{proof}
By \emph{Proposition} \ref{add-opp-car}, the graphs $\Gamma(T_1,T_2)$ and $\Gamma(T'_1,T'_2)$ differs by a pair of parallel edges attached only to a vertex. We denote these two edges by $e_+$ and $e_-$. Using the rule of deletion and contraction we have that
\begin{align*}
	T_{\Gamma(T'_1,T'_2)}(x,y)&=T_{\Gamma(T'_1,T'_2)-e_-}(x,y)+T_{\Gamma(T'_1,T'_2)/e_-}(x,y)=\\
	&=T_{\Gamma(T'_1,T'_2)-e_-}(x,y)+y \, T_{\Gamma(T_1,T_2)}(x,y)=\, \textrm{ ($e_+$ is a loop in $\Gamma(T'_1,T'_2)/e_-$)}\\
	&=x \, T_{\Gamma(T_1,T_2)}(x,y)+y \, T_{\Gamma(T_1,T_2)}(x,y)=\, \textrm{ ($e_+$ is a bridge in $\Gamma(T'_1,T'_2)-e_-$)}\\
	&=T_{\Gamma(T_1,T_2)}(x,y) (x+y).
\end{align*}
\end{proof}

We define the Tutte function as
$$
	T_g(x,y)\doteq T_{\Gamma(T_1,T_2)}(x,y) (x+y)^{-n+1} 
$$
where $T_1, T_2\in\T_n$ and $g=g(T_1,T_2)\in F$. The above lemma shows that it is well defined.

Consider the partition function of the Potts model 
$$
	Z(G;Q,K)=\sum_\sigma e^{-K\sum_{\overline{ij}\in E(G)} (1-\delta(\sigma_i,\sigma_j))}
$$
where $\overline{ij}$ is an edge and $\sigma_i$ is the spin at site $i\in V(G)$.
It may be proved that (up to a constant) the partition function is equal to the Tutte polynomial for certain values of the variables. More precisely
$$
	Z(G;Q,K)=Q(y-1)^{|V|-1}y^{-|E|}T_G(x,y)
$$
for $y=e^K$, $x=\frac{y+Q-1}{y-1}$. For more details we refer to \cite{Welsh}, p. 1127, 1132.

\begin{theorem}
The function $T_g(x,y)$ is of positive type on $F$ for $y=e^K$, $x=\frac{y+Q-1}{y-1}$ and $K\neq 0$.
\end{theorem}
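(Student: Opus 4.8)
The plan is to mimic the proof given above for the chromatic polynomial, realizing the Tutte function (suitably disguised) as a matrix of inner products of vectors in a Hilbert space, and then invoke Lemma \ref{inner-prod-matrix}. The key observation is that the variable change $y=e^K$, $x=\frac{y+Q-1}{y-1}$ expresses the Tutte polynomial as (a constant multiple of) the Potts partition function $Z(G;Q,K)$, and the Potts partition function has a natural ``sum over spin configurations'' form that factorizes along the boundary of the graph $\Gamma(T_+,T_-)=\Gamma_+(T_+)\cup_\partial \Gamma_-(T_-)$. So the first step is bookkeeping: given $g_1,\dots,g_r\in F$, write $g_i=g(T^i_+,T^i_-)$ and, by inserting opposing carets (using the just-proved Lemma to track how $T_g$ changes, and the fact that $T_g$ is well defined), arrange $T^i_-=T^j_-$ for all $i,j$, so that $g_ig_j^{-1}=g(T^i_+,T^j_+)$ and the matrix to be shown positive is $\big(T_{\Gamma(T^i_+,T^j_+)}(x,y)(x+y)^{-n+1}\big)_{i,j}$.

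Next I would pass from the Tutte polynomial to the Potts partition function via $Z(G;Q,K)=Q(y-1)^{|V|-1}y^{-|E|}T_G(x,y)$. For a graph $\Gamma(T^i_+,T^j_+)$ with $n$ boundary leaves, both $|V|$ and $|E|$ are determined (up to the common boundary data) by $i$ and $j$ separately in a way that splits as a ``$+$-part'' plus a ``$-$-part''; the point is that the prefactor $Q(y-1)^{|V|-1}y^{-|E|}$, after dividing by the analogous diagonal normalizations, contributes only an overall positive scalar and two diagonal rescalings $d_i\overline{d_j}$, which do not affect positivity of the matrix. Concretely, one wants to check that $T_{\Gamma(T^i_+,T^j_+)}(x,y)(x+y)^{-n+1}$ equals $c\, d_i \overline{d_j}\, Z(\Gamma(T^i_+,T^j_+);Q,K)$ for some positive constant $c$ and scalars $d_i$ depending only on $T^i_+$ (for real $K$ the relevant quantities are real, so the bars are cosmetic); since rescaling rows and columns of a matrix by $d_i$ and $\overline{d_j}$ preserves positive semidefiniteness, it suffices to prove $\big(Z(\Gamma(T^i_+,T^j_+);Q,K)\big)_{i,j}\geq 0$.

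The heart of the matter is then to exhibit $Z(\Gamma(T^i_+,T^j_+);Q,K)$ as an inner product. Writing $a=e^{-K}$, one has the standard expansion $e^{-K(1-\delta(\sigma_i,\sigma_j))}=a+(1-a)\delta(\sigma_i,\sigma_j)$, so summing over spin configurations on $\Gamma(T_+,T_-)$ factorizes over the edge set $E=E_+\sqcup E_-$ coming from $\Gamma_+(T_+)$ and $\Gamma_-(T_-)$. Exactly as in the chromatic case, take $\H_n=\bigotimes_{k=1}^n \mathbb{C}^Q$ with basis indexed by boundary spin vectors $\sigma=(i_1,\dots,i_n)$, $i_k\in\{1,\dots,Q\}$, and for a tree $T\in\T_n$ define a weight $w_T(\sigma)=\sum_{\tau} \prod_{\overline{ij}\in E_+(T)} (a+(1-a)\delta(\tau_i,\tau_j))$, the sum being over extensions $\tau$ of the boundary configuration $\sigma$ to the internal vertices of $\Gamma_+(T)$; put $v_T=\sum_\sigma w_T(\sigma)\, e_{i_1}\otimes\cdots\otimes e_{i_n}\in\H_n$. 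By orthonormality of the product basis and the edge-factorization of $Z$, one gets $Z(\Gamma(T^i_+,T^j_+);Q,K)=\langle v_{T^i_+}, v_{T^j_+}\rangle$ (note $\Gamma_-(T^j_+)$ carries the same edge combinatorics as $\Gamma_+(T^j_+)$ once we identify boundaries, so the ``$-$'' factor produces $\overline{w_{T^j_+}(\sigma)}=w_{T^j_+}(\sigma)$ as $K$ is real). Lemma \ref{inner-prod-matrix} then finishes the argument, and folding back the harmless positive prefactor and diagonal rescaling yields positivity of the original Tutte matrix.

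The main obstacle I anticipate is not the Hilbert-space construction itself but the precise accounting of the normalization: one must verify that the powers of $(y-1)$, $y^{-|E|}$ and $(x+y)^{-n+1}$ assemble into a genuine diagonal rescaling $c\,d_i\overline{d_j}$ — i.e. that the ``interaction'' between $T^i_+$ and $T^j_+$ enters $|V(\Gamma(T^i_+,T^j_+))|$ and $|E(\Gamma(T^i_+,T^j_+))|$ only additively via the separate $+$- and $-$-contributions plus a fixed boundary term. This is where the structure of Jones' gluing $\Gamma=\Gamma_+\cup_\partial\Gamma_-$, together with the earlier remarks identifying $\Gamma_\pm(T_\pm)$ with subtrees of the duals of $T_\pm$, must be used carefully; the condition $K\neq 0$ is exactly what guarantees $y\neq 1$ and $x+y\neq 0$ so that these normalizing factors are finite and nonzero. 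Once that bookkeeping is pinned down, the positivity is automatic.
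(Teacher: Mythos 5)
Your proposal is correct and follows essentially the same route as the paper: reduce to the matrix $\big(T_{\Gamma(T^i_+,T^j_+)}(x,y)(x+y)^{-n+1}\big)_{i,j}$, pass to the Potts partition function, factorize the sum over spin configurations along the gluing $\Gamma_+(T^i_+)\cup_\partial \Gamma_-(T^j_+)$ to get $Z=\langle v_{T^i_+},v_{T^j_+}\rangle$ in $\mathbb{C}^{Q^n}$, and invoke Lemma \ref{inner-prod-matrix}. The only point you over-engineer is the normalization: since every $\Gamma(T^i_+,T^j_+)$ here has exactly $n$ vertices and $2(n-1)$ edges and the trees $\Gamma_\pm$ have no internal vertices (their vertex set is the common boundary), the prefactor $Q\,(y-1)^{|V|-1}y^{-|E|}(x+y)^{|V|-1}$ is one and the same constant for all $(i,j)$, positive when $K\neq 0$ because $(y-1)(x+y)=y^{2}+Q-1>0$, so no diagonal rescaling $d_i\overline{d_j}$ and no summation over interior extensions is actually needed.
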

\begin{proof}
Without  loss of generality we can suppose that  $g_i=g(T_+^i,T_-^i)$  with $T_\pm^i\in \T_n$ for $i=1, \cdots, r$, 
and $g_ig_j^{-1}=g(T^i_+,T^j_+)$. Therefore, we need to consider  the matrix $\Big(T_{\Gamma(T^i_+,T^j_+)}(x,y)/(x+y)^{n-1}\Big)_{i,j=1}^r$
and show that it is positive semidefinite. 
Our aim is to use the Potts partition function and prove the claim by actually showing 
that $(Z(\Gamma(T^i_+,T^j_+);Q,K))_{i,j=1}^r$ is positive semidefinite. Indeed, the partition function and the Tutte function differ by the four different factors: $Q$,  $y^{-|E|}$, $(y-1)^{|V|-1}$ and $(x+y)^{|V|-1}$. We notice that all the factors do not depend on the pair $(i,j)$. Moreover, $Q>0$ and $y^{-|E|}>0$ so we may neglect them. The remaining factor, $(y-1)^{|V|-1}(x+y)^{|V|-1}$, is positive as long as $K\neq 0$.

Suppose that $G=\Gamma(T_+,T_-)$. We note that the partition function may be rewritten as
$$
		Z(\Gamma(T_+,T_-);Q,K)=\sum_\sigma e^{-K\sum_{\overline{ij}\in E(\Gamma_+(T_+))} (1-\delta(\sigma_i,\sigma_j))}e^{-K\sum_{\overline{ij}\in E(\Gamma_-(T_-))} (1-\delta(\sigma_i,\sigma_j))}.
$$
For any $\sigma=(\sigma_1,\cdots ,\sigma_n)$, the expression $e^{-K\sum_{\overline{ij}\in E(\Gamma_+(T_+))} (1-\delta(\sigma_i,\sigma_j))}$ defines the $\sigma-$th component of a vector in $\H=\mathbb{C}^{Q^n}$, i.e the component corresponding to $e_{\sigma_1}\otimes \cdots\otimes e_{\sigma_n}$. Thus, we may define a vector $v_{T_i}$ and similarly a vector $v_{T_j}$ such that 
$$
	Z(\Gamma(T_i,T_j),x,y)=\langle v_{T_i},v_{T_j}\rangle.
$$
It follows by \emph{Lemma} \ref{inner-prod-matrix} that the matrix $(T_{g_ig_j^{-1}}(x,y))_{i,j=1}^r$ is positive semidefinite for any $r$, i.e. the function $T_g(x,y)$ is of positive type.
\end{proof}

One might wonder whether there are other values of the parameters for which the Tutte function is of positive type.

\begin{remark}
We want to observe that the result on the chromatic polynomial seen in the previous section can be obtained by results in the present section. In fact, consider the equality 
$$
	y^{n}Z(G;Q,K)=Q(y-1)^{n-1}T_G(x,y)
$$
where $y=e^K$, $x=\frac{y+Q-1}{y-1}$. For $K\to -\infty$, the right hand-side converges to 
$$
	Q(-1)^{n-1}T_G(1-Q,0)
$$
It is well known that this limit is equal to ${\rm Chr}(G,Q)$. We now consider the left hand-side. First of all, we observe that the integer $\sum (1-\delta(\sigma_i,\sigma_j))$ can only take values between $0$ and $n$. If the sum corresponding to $\sigma$ is smaller than $n$, then $e^{K(n-\sum \cdot)}$ vanishes in the limit when $K\to -\infty$. If the sum is equal to $n$, then the exponential is equal to $1$. It may easily be  seen that  the vectors considered for the chromatic polynomial correspond to those chosen for the Tutte polynomial (up to a factor $e^{Kn}$) in the limit $K\to -\infty$. 
\end{remark}

\section{The Kauffman bracket}
In \cite{Jo} Jones proved that certain evaluations of the Kauffman bracket give rise to positive definite functions. Our aim is to give a different and elementary proof of this fact, at least for some specific values of the variable.
We need to recall some preliminary definitions and results. In the first place, the Kauffman bracket \cite{Kauffman} is defined by the following skein-relation
$$
		 \left\{\begin{array}{l}
 		\langle \slashoverback\rangle=A\langle \smoothing \rangle+A^{-1}\langle \hsmoothing \rangle\\
 		\langle O\rangle= 1.
 		\end{array}
 		\right.
$$
This polynomial is invariant under Reidemeister moves of type 2 and 3. The first Reidemeister move produces a factor $(-A^3)^{\pm 1}$ (see \cite{Bollobas}, p. 365).
\smallskip

To begin with, we prove a  lemma that will allow us to define a function on $F$ associated with the Kauffman bracket. % function.
\begin{lemma}
Consider $T_1, T_2\in\T_n$ and let $T'_1, T'_2\in\T_{n+1}$  be the trees obtained by the addition of a pair of opposite carets. Then the following equality holds
$$
	\langle L(T'_1,T'_2)\rangle (A)=(-A^2-A^{-2})\langle L(T_1,T_2)\rangle (A).
$$
\end{lemma}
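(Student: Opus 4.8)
The plan is to mimic the structure of the analogous lemmas already proved for the Tutte polynomial and the chromatic polynomial, exploiting \emph{Proposition} \ref{add-opp-car}(2), which tells us exactly what happens to the link when an opposing pair of carets is added.

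First I would invoke \emph{Proposition} \ref{add-opp-car}(2), which asserts that $L(T'_1,T'_2) = L(T_1,T_2) \cup O$, i.e.\ the new diagram is the old one together with a distant (split) unknotted circle. So the only thing to check is the behaviour of the Kauffman bracket under disjoint union with a circle. For this I would use the standard fact that $\langle D \cup O \rangle = (-A^2 - A^{-2}) \langle D \rangle$ for any diagram $D$, which follows immediately from the defining skein relations: apply the skein relation at a crossing, or more directly, note that $\langle O \rangle = 1$ together with the rule that adding a disjoint circle multiplies the bracket by $\delta \doteq -A^2 - A^{-2}$ (this is the value of the loop variable in the Kauffman bracket state sum, $\langle D \rangle = \sum_{\text{states}} A^{\sharp} (-A^2-A^{-2})^{\,\|\cdot\| - 1}$). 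Concretely, one writes $\langle O \cup O \rangle = A \langle O \rangle + A^{-1} \langle O \rangle$ applied to a curl\ldots actually cleaner: from the state-sum formula each state of $D \cup O$ has exactly one more loop than the corresponding state of $D$, so every term picks up one extra factor of $(-A^2-A^{-2})$, giving $\langle D \cup O\rangle = (-A^2-A^{-2})\langle D\rangle$.

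Putting these together: $\langle L(T'_1,T'_2)\rangle(A) = \langle L(T_1,T_2) \cup O\rangle(A) = (-A^2-A^{-2})\langle L(T_1,T_2)\rangle(A)$, which is exactly the claimed identity.

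I do not anticipate a real obstacle here; the lemma is essentially a bookkeeping statement, and all the geometric content is already packaged in \emph{Proposition} \ref{add-opp-car}. The only point requiring a word of care is whether the Kauffman bracket is being used as an invariant of diagrams or of links — since it is only a regular isotopy invariant (it changes under Reidemeister~I by $(-A^3)^{\pm 1}$), one must make sure that $L(T_1,T_2)$ genuinely denotes the \emph{diagram} produced by Jones' construction and not an equivalence class, and that adding the distant unknot is literally a disjoint union of diagrams with no new crossings. Granting that (which is how $L$ was defined in the Preliminaries, via a fixed medial graph and fixed crossing assignments), the computation above is rigorous and the factor $(-A^2-A^{-2})$ is forced. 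This is also the natural normalizing factor to divide by when subsequently defining the Kauffman bracket function $g \mapsto \langle L_g\rangle(A)\,(-A^2-A^{-2})^{-n+1}$ on $F$, in parallel with the Tutte function $T_g(x,y)$ defined above.
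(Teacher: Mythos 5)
Your proof is correct and follows essentially the same route as the paper: invoke Proposition~\ref{add-opp-car}(2) to identify $L(T'_1,T'_2)$ with $L(T_1,T_2)\cup O$ and then use the standard behaviour of the Kauffman bracket under disjoint union with an unknot, which you justify via the state sum while the paper simply appeals to ``the properties of the Kauffman bracket.'' The only difference is cosmetic (your extra care about diagrams versus isotopy classes, and a harmless discrepancy in the normalizing exponent you mention at the end, which concerns the subsequent definition rather than the lemma itself).
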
 
\begin{proof}
By \emph{Proposition} \ref{add-opp-car}, the graphs $\Gamma(T_1,T_2)$ and $\Gamma(T'_1,T'_2)$ differs by a pair of parallel edges attached only to a vertex, this means that  a new distant unknot has been added. The properties of the Kauffman bracket imply the claim.
\end{proof}

Now the %Consider the
 Kauffman bracket function can be defined as
$$
	\langle g\rangle (A)\doteq (-A^2-A^{-2})^{-n}\langle L(T_1,T_2)\rangle (A)
$$
where $T_1, T_2\in\T_n$ and $g=g(T_+,T_-)\in F$. The above lemma shows that the function is well defined.

We now recall some results proved in \cite{DLH-J}. Consider a signed graph $G$. Denote by $G^+$ and $G^-$ the subgraphs whose edges are the positive and the negative edges, respectively. For any $i, j\in V(G)$, define the function
\begin{eqnarray*}
	w(\sigma_i,\sigma_j)&=& \left\{\begin{array}{cc}
 		-A^3 & \textrm{ if $\sigma_i=\sigma_j$} \\ 
 		A^{-1} & \textrm{ if $\sigma_i\neq \sigma_j$} \\ 
 		\end{array}
 		\right.
\end{eqnarray*}
where $\sigma_i$ is the spin at site $i$. Set $w_+(\sigma_i,\sigma_j)=w(\sigma_i,\sigma_j)$ and $w_-(\sigma_i,\sigma_j)=w(\sigma_i,\sigma_j)^{-1}$. 

Consider the partition function defined by 
$$
	Z_{G}(A)=\left(\frac{1}{\sqrt{Q}}\right)^{|V(G)|+1}\sum_\sigma \prod_{\overline{ij}\in E(G^+)} w_+(\sigma_i,\sigma_j)\prod_{\overline{ij}\in E(G^-)}w_-(\sigma_i,\sigma_j) \ ,
$$
where the sum over $\sigma$ runs over all the spin configurations $\{1,\cdots,Q\}^{|V(G)|}$. 

Given a link $L$ with link diagram $D$, we denote by $F(D)$ its face graph (for a definition see \cite{Godsil}, p. 379).
We notice that the face graph and $\Gamma$-graphs coincide (as signed graphs), namely
$$
	F(L(T_+,T_-))=\Gamma(T_+,T_-).
$$

\begin{lemma}
Let $Q \geq 2$ be an integer and let $A$ be such that $A^2 + A^{-2} + \sqrt Q = 0$. Consider a link $L$ with link diagram $D$.
Then  the following identity holds
$$
	\langle L(D)\rangle=Z_{F(D)}.
$$
\end{lemma}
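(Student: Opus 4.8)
The plan is to establish the identity $\langle L(D)\rangle = Z_{F(D)}$ by expanding the Kauffman bracket as a state sum over the crossings and matching it term by term with the Potts-type partition function $Z_{F(D)}$. First I would recall the well-known reformulation of the Kauffman bracket via the face graph (equivalently the Tait/checkerboard graph): resolving each crossing in one of the two ways corresponds to choosing, for each edge of $F(D)$, whether the two faces it separates are ``merged'' or ``kept apart'', and the standard dictionary says that a state $s$ contributes $A^{(\#A\text{-smoothings})-(\#B\text{-smoothings})}(-A^2-A^{-2})^{|s|-1}$, where $|s|$ is the number of loops. Because $F(D)=\Gamma(T_+,T_-)$ carries a sign on each edge (positive edges coming from $T_+$, negative from $T_-$, which is exactly how the two crossing types $\slashoverback$ and $\backoverslash$ were assigned in Example \ref{ese-omega2}), the $A$ versus $A^{-1}$ roles get swapped on negative edges, which is precisely the reason the paper introduced $w_+$ and $w_-=w_+^{-1}$.

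The key computational step is then to pass from the ``loop state sum'' to the ``spin state sum''. This is the classical identity, going back to Fortuin--Kasteleyn/Temperley--Lieb, that $(-A^2-A^{-2})^{k} = \delta^{k}$ with $\delta=\sqrt Q$ counts loops by summing $Q$-valued spins along each loop. Concretely, for each state $s$ of the diagram one writes $\delta^{|s|-1} = \delta^{-1}\sum_{\sigma}\prod_{\text{loops}} [\text{spin constant on loop}]$, i.e. one puts an independent spin $\sigma_v\in\{1,\dots,Q\}$ on each face/vertex $v$ of $F(D)$ and, on each edge $\overline{ij}$, the local weight forces $\sigma_i=\sigma_j$ or allows $\sigma_i\neq\sigma_j$ according to the smoothing chosen. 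Summing over the two smoothings at a given crossing with the correct $A^{\pm1}$ weights produces exactly $w(\sigma_i,\sigma_j)$: the diagonal term $\sigma_i=\sigma_j$ collects $A+(\text{something})$ giving $-A^3$, and the off-diagonal term gives $A^{-1}$, matching the definition of $w$ (and $w^{-1}$ on negative edges). After summing over all crossings one is left with $\delta^{-|V(F(D))|-1}\sum_\sigma \prod_{E^+}w_+\prod_{E^-}w_-$, where the power of $\delta$ bookkeeps the $\delta^{-1}$ per loop absorbed into $\delta$ per face via Euler's formula relating faces, edges and the fixed number of loops — this is exactly $Z_{F(D)}(A)$ with the normalization $(1/\sqrt Q)^{|V|+1}$ written in the paper.

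The step I expect to be the main obstacle is the exact bookkeeping of the normalization exponent: one must verify that the number of spin sites ($=|V(F(D))|$), the extra $\delta^{-1}$, and the passage from counting loops $|s|$ to counting faces line up to give precisely the exponent $|V(F(D))|+1$ in $Z_{F(D)}$, using the combinatorial relation between a link diagram's crossings, its Seifert-type loop states, and the two checkerboard graphs $F(D)$ and its planar dual. I would carry this out by first checking it on the unknot and the trefoil of Example \ref{ese-omega2} (where $\langle\cdot\rangle$ is known explicitly) to fix the constant, and then arguing the general case by induction on the number of crossings: adding a crossing either adds one edge to $F(D)^+$ or $F(D)^-$, and the skein relation on the bracket side must be shown to correspond to the $\sum_\sigma$-over-the-new-edge expansion on the $Z$ side, with the $\delta$-powers tracked through the (de)merging of faces. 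The hypothesis $A^2+A^{-2}+\sqrt Q=0$ enters exactly here, as it is what makes the loop-weight $-A^2-A^{-2}$ equal to $\delta=\sqrt Q$ so that the loop sum becomes a genuine $Q$-state spin sum; without it the two state sums have incompatible per-loop weights. I would also note that since the bracket of $D$ depends on the diagram only up to the framing factor $(-A^3)^{\pm1}$ and the paper's function on $F$ was already shown to be well defined up to the caret normalization, no orientation or writhe correction is needed for the statement as phrased.
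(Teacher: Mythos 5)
Your route is correct, but it is genuinely different from the paper's. The paper's proof is a two-line citation argument: it invokes the writhe-normalization identity $V_{\vec L(D)}(A^{-4})=(-A^3)^{-wr}\langle L(D)\rangle$ together with the de la Harpe--Jones theorem that the same writhe-normalized quantity equals $(-A^3)^{-wr}Z_{F(D)}$, and cancels the common factor $(-A^3)^{-wr}$; all the combinatorial content is offloaded to \cite{DLH-J}. What you propose is, in effect, a self-contained re-derivation of that cited theorem: expand the bracket as a state sum over smoothings, use the hypothesis $A^2+A^{-2}=-\sqrt Q$ to replace the per-loop weight $-A^2-A^{-2}$ by $\delta=\sqrt Q$, convert the loop count into a $Q$-valued spin sum on the faces (Fortuin--Kasteleyn), and recognize the local crossing weights as $w_\pm$, with signed edges of $F(D)=\Gamma(T_+,T_-)$ accounting for the two crossing types. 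This buys transparency about exactly where the hypothesis on $A$ and $Q$ enters and makes the lemma independent of the Jones-polynomial detour, at the cost of the normalization bookkeeping you correctly identify as the delicate point: one must check, via Euler's formula relating crossings, state loops and shaded faces, that the loop exponent $|s|-1$ matches the prefactor $(1/\sqrt Q)^{|V(F(D))|+1}$ (a quick sanity check on the crossingless unknot and two-component unlink diagrams confirms the exponent), and one must also verify the diagonal weight honestly, i.e.\ $w(\sigma_i,\sigma_j)=A^{-1}+A\sqrt Q\,\delta_{\sigma_i\sigma_j}$, which equals $-A^3$ on the diagonal precisely because $A\sqrt Q=-A^3-A^{-1}$ under the stated constraint, rather than the vaguer ``$A$ plus something''. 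You should also make sure your checkerboard/sign conventions agree with the paper's (which, as its later remark warns, differ from the standard textbook assignment of crossings to signs), since otherwise $w_+$ and $w_-$ get interchanged. With those details pinned down your argument is complete and arguably more informative than the paper's, which gains brevity but hides the mechanism inside the reference.
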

\begin{proof}
The claim follows from the following identities
\begin{eqnarray*}
	V_{\stackrel{\to}{L}(D)}(A^{-4})&=&(-A^3)^{-wr(\stackrel{\to}{L}(D))}\langle L(D)\rangle\\
	V_{\stackrel{\to}{L}(D)}(A^{-4})&=&(-A^3)^{-wr(\stackrel{\to}{L}(D))}Z_{F(D)},	
\end{eqnarray*}
where $wr(\cdot)$ denotes the writhe.
\end{proof}

Before stating the main theorem of this section we formulate the following simple, but useful, lemma:
\begin{lemma}
Consider the 
equation 
$$
A^4+\sqrt{Q}A^2+1=0 \qquad Q\geq 2.
$$
Then 
\begin{itemize}
\item if $Q=2$, the complex solutions have modulus equal to $1$, namely they are $A=\pm e^{3 \pi i/8}$, $A=\pm e^{-3 \pi i/8}$;
\item if $Q=3$, the complex solutions have modulus equal to $1$, namely they are $A= \pm e^{5 \pi i/12}$,  $A= \pm e^{-5 \pi i/12}$;
\item if $Q=4$, the complex solutions have modulus equal to $1$, namely they are $A=\pm i$, each one with with multiplicity two;
\item if $Q\geq 5$, the complex solutions are purely imaginary with modulus different from 1.
\end{itemize}
\end{lemma}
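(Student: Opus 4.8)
The plan is to reduce the quartic to a quadratic via the substitution $z = A^2$, so that the equation becomes $z^2 + \sqrt{Q}\, z + 1 = 0$, with roots $z_\pm = \tfrac{1}{2}\bigl(-\sqrt{Q} \pm \sqrt{Q-4}\bigr)$ by the quadratic formula. I would then split the analysis according to the sign of the discriminant $Q-4$, and in each case read off $|A|$ from $|A|^2 = |A^2| = |z_\pm|$ and recover the explicit roots by extracting square roots.

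For $Q \in \{2,3\}$ the discriminant is negative, so $z_\pm = \tfrac{1}{2}\bigl(-\sqrt{Q} \pm i\sqrt{4-Q}\bigr)$ and hence $|z_\pm|^2 = \tfrac{1}{4}\bigl(Q + (4-Q)\bigr) = 1$; this already gives $|A| = 1$. To name the solutions, write $z_\pm = e^{\pm i\theta}$ with $\cos\theta = -\sqrt{Q}/2$: for $Q=2$ this is $\theta = 3\pi/4$, whose four square roots are $A = \pm e^{\pm 3\pi i/8}$, and for $Q = 3$ it is $\theta = 5\pi/6$, giving $A = \pm e^{\pm 5\pi i/12}$. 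For $Q = 4$ the discriminant vanishes, $z = -1$ is a double root, so $A^2 = -1$ and $A = \pm i$, each occurring with multiplicity two in the original quartic.

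For $Q \geq 5$ the discriminant is positive, so $z_\pm$ are real; since $\sqrt{Q-4} < \sqrt{Q}$ both are strictly negative, and Vieta's relation gives $z_+ z_- = 1$, so they are mutually reciprocal and (the discriminant being nonzero) distinct. Neither can equal $1$ since both are negative, and neither can equal $-1$ because $z = -1$ solves $z^2 + \sqrt{Q}\, z + 1 = 0$ precisely when $Q = 4$; hence $|z_\pm| \neq 1$. Therefore $A^2$ is a negative real number different from $-1$, so each $A$ is purely imaginary with $|A| \neq 1$.

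All of this is elementary bookkeeping; the only place that calls for a moment of care is the last case, where one must genuinely exclude $|z_\pm| = 1$, and this is exactly what the reciprocity $z_+ z_- = 1$ together with $z_\pm < 0$ and $z_\pm \neq -1$ provides.
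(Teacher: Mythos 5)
Your proof is correct and takes essentially the same route as the paper: substitute $z=A^2$, solve the quadratic $z^2+\sqrt{Q}\,z+1=0$, and check moduli case by case (the paper dismisses $Q=2,3,4$ as easily verified and only details $Q\geq 5$). For $Q\geq 5$ your use of Vieta's relation $z_+z_-=1$ together with excluding $z=\pm 1$ is just a repackaging of the paper's rationalization $A^2=\frac{-2}{\sqrt{Q}+\sqrt{Q-4}}$, which forces $\sqrt{Q}+\sqrt{Q-4}=2$ and hence $Q=4$; both arguments are equally valid.
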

\begin{proof}
All the statements are easily verified. We only make one comment for the case $Q\geq 5$. In this case, we have that
$$
A^2=\frac{-\sqrt{Q}\pm \sqrt{Q-4}}{2}
$$
The solutions corresponding to the negative sign clearly have modulus different from $1$. The solutions corresponding to the positive sign may be rewritten as
$$
A^2=\frac{-2}{\sqrt{Q}+ \sqrt{Q-4}}
$$
and the condition $|A|=1$ require $\sqrt{Q}+ \sqrt{Q-4}=2$. However, this happens only when $Q=4$.
\end{proof}

\begin{theorem} \label{pos-def-kauff}
The function $\langle g\rangle(A)$, where $A$ is any solution of $A^2 + A^{-2} + \sqrt Q = 0$ for $Q=2, 3, 4$, is of positive type  on $F$.
\end{theorem}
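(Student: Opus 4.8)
The plan is to mimic the strategy used for the chromatic polynomial and the Tutte polynomial: reduce the positive-definiteness condition to exhibiting the relevant matrix as a Gram matrix, and then invoke \emph{Lemma}~\ref{inner-prod-matrix}. First I would take $g_1,\dots,g_r \in F$ and, by inserting opposing pairs of carets as in \emph{Proposition}~\ref{add-opp-car}, arrange that $g_i = g(T^i_+,T^i_-)$ with all $T^i_-$ equal to a common tree in $\T_n$; this is harmless since $\langle g\rangle(A)$ is well defined (independent of the representative) by the preceding lemma. Then $g_i g_j^{-1} = g(T^i_+,T^j_+)$, and since the normalizing factor $(-A^2-A^{-2})^{-n}$ does not depend on the pair $(i,j)$, it suffices to show that the matrix $\big(\langle L(T^i_+,T^j_+)\rangle(A)\big)_{i,j=1}^r$ is positive semi-definite.

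Next I would use the two lemmas connecting the Kauffman bracket to the Potts-type partition function $Z_{F(D)}$: since $F(L(T_+,T_-)) = \Gamma(T_+,T_-)$ and $A^2 + A^{-2} + \sqrt Q = 0$ for $Q = 2,3,4$, we have $\langle L(T^i_+,T^j_+)\rangle(A) = Z_{\Gamma(T^i_+,T^j_+)}(A)$. Now I would exploit the fact that $\Gamma(T^i_+,T^j_+)$ is obtained by gluing the rooted trees $\Gamma_+(T^i_+)$ and $\Gamma_-(T^j_+)$ along their common $n$ boundary vertices, with the former contributing only positive edges and the latter only negative edges. Writing out the definition of $Z$, for each spin configuration $\sigma \in \{1,\dots,Q\}^n$ the summand factors as a product of a term depending only on $T^i_+$ (through $\prod_{\overline{k\ell}\in E(\Gamma_+(T^i_+))} w_+(\sigma_k,\sigma_\ell)$) and a term depending only on $T^j_+$ (through $\prod_{\overline{k\ell}\in E(\Gamma_-(T^j_+))} w_-(\sigma_k,\sigma_\ell)$). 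Hence, setting, in the Hilbert space $\H_n = \bigotimes_{k=1}^n \Complessi^Q$, the vector $v_{T^i_+}$ with $\sigma$-component $\big(Q^{-1/4}\big)\prod_{\overline{k\ell}\in E(\Gamma_+(T^i_+))} w_+(\sigma_k,\sigma_\ell)$ relative to the basis vector $e_{\sigma_1}\otimes\cdots\otimes e_{\sigma_n}$, and $v_{T^j_+}$ with $\sigma$-component $\big(Q^{-1/4}\big)\overline{\prod_{\overline{k\ell}\in E(\Gamma_-(T^j_+))} w_-(\sigma_k,\sigma_\ell)}$, one obtains $Z_{\Gamma(T^i_+,T^j_+)}(A) = \langle v_{T^i_+}, v_{T^j_+}\rangle$, up to the scalar power of $Q^{-1/2}$ coming from the $|V|+1$ exponent, which is again independent of $(i,j)$ and positive.

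The delicate point — and the reason the hypothesis $Q = 2,3,4$ is imposed — is that to write the matrix as a \emph{genuine} Gram matrix of the form $(\langle v_i, v_j\rangle)$ we need the two families of vectors (those built from $\Gamma_+$ with $w_+$, and those built from $\Gamma_-$ with $w_-$) to be compatible, i.e. $v_{T^j_+}$ as used on the right of the inner product must be the \emph{same} vector (or its conjugate) as the one that would appear on the left. Since $w_- = w_+^{-1}$ and, for the values $A$ solving the equation with $Q = 2,3,4$, the previous lemma guarantees $|A| = 1$, we have $w_-(\sigma_k,\sigma_\ell) = w_+(\sigma_k,\sigma_\ell)^{-1} = \overline{w_+(\sigma_k,\sigma_\ell)}$; thus $\prod_{E(\Gamma_-(T^j_+))} w_- = \overline{\prod_{E(\Gamma_-(T^j_+))} w_+}$, and since $\Gamma_-(T^j_+)$ is the tree associated to $T^j_+$ exactly as $\Gamma_+(T^j_+)$ is, up to the half-plane placement, the vectors match up and the matrix is Hermitian positive semi-definite. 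I expect verifying this matching — that the reflection-across-the-axis symmetry together with $|A|=1$ turns the $w_-$-weighted vector into the complex conjugate of the $w_+$-weighted one, so that \emph{Lemma}~\ref{inner-prod-matrix} applies — to be the main obstacle; for $Q \geq 5$ the lemma shows $|A| \neq 1$, so $w_- \neq \overline{w_+}$ and this argument breaks down, which is consistent with the theorem being stated only for $Q = 2,3,4$. Once the Gram representation is in place, \emph{Lemma}~\ref{inner-prod-matrix} finishes the proof.
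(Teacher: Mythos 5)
Your proposal is correct and follows essentially the same route as the paper: reduce to the matrix $\bigl(\langle L(T^i_+,T^j_+)\rangle\bigr)_{i,j}$, identify the bracket with the de la Harpe--Jones partition function via $F(L(T_+,T_-))=\Gamma(T_+,T_-)$, factor each summand over $E(\Gamma_+)$ and $E(\Gamma_-)$ to get a Gram matrix, and conclude with \emph{Lemma}~\ref{inner-prod-matrix}. The point you single out as delicate --- that $|A|=1$ for $Q=2,3,4$ gives $w_-=\overline{w_+}$, so the two families of vectors genuinely match --- is exactly the observation the paper makes (in a remark following the theorem) to justify the restriction on $Q$.
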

\begin{proof}
As we have done before,
without loss of generality we can suppose that the $g_i=g(T_+^i,T_-^i)$  with $T_\pm^i\in \T_n$ for $i=1, \cdots, r$ and $g_ig_j^{-1}=g(T^i_+,T^j_+)$. Therefore, it is enough to consider  $\Big(\langle L(T^i_+,T^j_+)\rangle /(-A^2-A^{-2})^{n}\Big)_{i,j=1}^r$ and prove that $(\langle L(T^i_+,T^j_+)\rangle)_{i,j=1}^r$ is positive semidefinite. Our aim is to use the partition function in order to prove our claim. The partition function and the Kauffman polynomial are equal by the above lemma. %The factor  $(1/Q)^{|V(G)|-1}$ is common to all the entries and positive, thus we may not consider it. 
Therefore, recalling the equality between the face graph and the  $\Gamma$-graph, 
\begin{eqnarray*}
	\langle L(T_+, T_-)\rangle &=& Z_{F(L(T_+, T_-))}= Z_{\Gamma(T_+, T_-)}=\\
	&=&\left(\frac{1}{\sqrt{Q}}\right)^{n+1}\sum_\sigma \prod_{\overline{ij}\in E(\Gamma_+(T_+))} w_+(\sigma_i,\sigma_j)\prod_{\overline{ij}\in E(\Gamma_-(T_-))}w_-(\sigma_i,\sigma_j).
\end{eqnarray*}
For any $\sigma=(\sigma_1,\cdots ,\sigma_n)$, the expression $\prod_{\overline{ij}\in E(\Gamma_-(T_-))} w_+(\sigma_i,\sigma_j)$ defines the $\sigma$-th component of a vector in $\H=\mathbb{C}^{Q^n}$, i.e the component corresponding to $e_{\sigma_1}\otimes \cdots\otimes e_{\sigma_n}$. Thus, we may define a vector $v_{T_+^i}$ and similarly a vector $v_{T_+^j}$ such that 
$$
	Z(\Gamma(T_i,T_j))=\langle v_{T_+^i},v_{T_+^j}\rangle.
$$
It follows by \emph{Lemma} \ref{inner-prod-matrix} that the matrix $(\langle L(T^i_+,T^j_+)\rangle)_{i,j=1}^r$ is positive semidefinite for any $r$, i.e. the function $\langle g\rangle$ is of positive type.
\end{proof}

\begin{remark}
We observe that the assumption of $Q=2, 3, 4$ was made in order to recognize the summands of the partition function as the scalar product of two vectors. In fact, in these cases $A$ is a complex root of unit and 
$$
w_-(\cdot,\cdot)=w(\cdot,\cdot)^{-1}=\overline{w(\cdot,\cdot)}=\overline{w_+(\cdot,\cdot)}.
$$
\end{remark}

\begin{remark}
When one considers a signed planar graph, a knot can be obtained drawing the medial graph and using two different types of crossing depending on the sign (see \cite{Godsil}, chapters 16, 17). 
For simplicity, we have decided to follow the same notation as in Jones' paper \cite{Jo}. However, we warn the reader that  the correspondence berween signs and crossing  differs from the one adopted in standard textbooks (for example see p. 380 in \cite{Godsil} and p. 33 in \cite{Jo}).
\end{remark}

\begin{example}
Consider the following elements of the Thompson group $F$: $g_1=x_0^{-1}$, $g_2=x_1^{-1}$, $g_3=\omega=x_1x_0^{-1}$, $g_4=\omega^{-1}=x_0x_1^{-1}$.
\\
\noindent
Now we compute $\langle g_1g_2^{-1}\rangle$. The element $x_0^{-1}x_1$ can be described by the trees $T_{12}^+, T_{12}^-\in\T_5$. 
Moreover, $L(T_{12}^+, T_{12}^-)$ is equivalent to the unknot. However, in order to obtain the unknot we have to use four times the first Reidemeister move, two producing a factor $(-A^3)^2$ and the others its inverse. Therefore
\begin{eqnarray*} 
	\langle T_{12}^+,T_{12}^- \rangle &=& 1.
\end{eqnarray*}
\\
\noindent
Now we compute $\langle g_1g_3^{-1}\rangle$. The element $x_0^{-1}\omega^{-1}$ is equal to $x_1^{-1}$. In the standard (i.e. minimal) representation  $x_1^{-1}$ can be described by a pair trees $T_{13}^+, T_{13}^-\in\T_4$. %By \emph{Proposition} \ref{link-inverse-element} 
The link associated to $x_1^{-1}$ is  equivalent to the disjoint union of two unknots. In order to obtain this trivial link one has to perform two Reidemeister moves of type $1$, one producing the factor $(-A^{-3})$ and the other one the factor $(-A^3)$. Therefore,% the link associated to $x_1^{-1}$ is the same that is associated to $x_1$ (two trivial knots) and
\begin{eqnarray*} 
	\langle T_{13}^+,T_{13}^- \rangle &=& (-A^2-A^{-2}).
\end{eqnarray*}
\\
\noindent
Now we compute $\langle g_1g_4^{-1}\rangle$. The element $g_1g_4^{-1}$ is equal to $x_0^{-1}x_1x_0^{-1}$ and it is associated to a pair of trees $T_{14}^+, T_{14}^-\in \T_5$. After an easy calculation one obtains the \emph{Hopf link}. Actually, we have to perform the resolution of two curls (one positive and one negative). Therefore, we have that 
\begin{eqnarray*} %vedi appunti scritti a mano con esempi
	\langle T_{14}^+,T_{14}^- \rangle &=& -A^{-4}-A^{4}.
\end{eqnarray*}
\\
\noindent
Now we compute $\langle g_2g_3^{-1}\rangle$. The element  $g_2g_3^{-1}$ is equal to $x_1^{-1}x_0x_1^{-1}$ and it is associated to a pair of trees $T_{23}^+, T_{23}^-\in \T_5$. The associated link is equivalent to the unknot. However, in order to obtain this trivial knot one has to perform four Reidemeister moves of type $1$, two producing the factor $(-A^{-3})^2$ and the other two the factor $(-A^3)^2$. Therefore 
\begin{eqnarray*} %vedi appunti scritti a mano con esempi
	\langle T_{23}^+,T_{23}^- \rangle &=& 1.
\end{eqnarray*}
\\
\noindent
Now we compute $\langle g_2g_4^{-1}\rangle$. The element  $g_2g_4^{-1}$ is equal to $x_0^{-1}$. In the standard (i.e. minimal) representation  $x_0^{-1}$ can be described by a pair trees $T_{24}^+, T_{24}^-\in\T_3$. %By \emph{Proposition} \ref{link-inverse-element} 
The link associated to $x_0^{-1}$  is equivalent to the disjoint union of two unknot. Moreover, in order to obtain this trivial link one has to perform two Reidemeister moves of type $1$, one producing the factor $(-A^{-3})$ and the other one the factor $(-A^3)$. Therefore %the link associated to $x_1^{-1}$ is the same that is associated to $x_1$ (two trivial knots) and
\begin{eqnarray*} %vedi appunti scritti a mano con esempi
	\langle T_{24}^+,T_{24}^- \rangle &=& 1.
\end{eqnarray*}
\\
\noindent
Now we compute $\langle g_3g_4^{-1}\rangle$. As we have seen in \emph{Example} \ref{ese-omega2} there exist two trees $T_{34}^+, T_{34}^-\in \T_5$ such that $\omega^2=g(T_{34}^+,T_{34}^-)$. After the application of two Reidemeister moves of type $2$, one move of type $3$ and one of type $1$ we get the trefoil. We have that
\begin{eqnarray*} %vedi foto 20150910 per vedere tutti i calcoli
	\langle T_{34}^+,T_{34}^-\rangle &=& (-A^{-3})(-A^{-5}-A^3+A^7)=1-A^{4}+A^{-8}.
\end{eqnarray*}
So far we have computed the entries over the diagonal. The entries under the diagonal differ from the corresponding entries over the diagonal by the change of variable $A \to A^{-1}$, cf. \emph{Proposition} \ref{link-inverse-element}. The only different entry is the one in position $(4,3)$:
$$
	\langle  T_{34}^-,T_{34}^+\rangle = 1-A^{-4}+A^{8}.
$$ 
Summing up, the matrix associated to these elements is
\begin{eqnarray*}
		\frac{1}{Q^{5/2}}\left(
 		\begin{array}{cccc}
 		Q^2 & 1 & Q & -A^4-A^{-4}\\
 		1 & Q^2 & 1 & Q\\
 		Q & 1 & Q^2 & A^{-8}-A^4+1\\ 
 		-A^4-A^{-4} & Q & A^{8}-A^{-4}+1 & Q^2\\
 		\end{array}
 		\right) .	
\end{eqnarray*}
One can check that for $A=e^{3\pi i /8}$ and $A=i$ we get the following matrices, respectively
\begin{eqnarray*}
		\frac{1}{2^{5/2}}\left(
 		\begin{array}{cccc}
 		4 & 1 & 2 & 0\\
 		1 & 4 & 1 & 2\\
 		2 & 1 & 4 & i\\ 
 		0 & 2 & -i & 4\\
 		\end{array}
 		\right) 	
 		\quad \textrm{ and } \quad
		\frac{1}{2^{5}}\left(
 		\begin{array}{cccc}
 		16 & 1 & 4 & -2\\
 		1 & 16 & 1 & 4\\
 		4 & 1 & 16 & 1\\ 
 		-2 & 4 & 1 & 16\\
 		\end{array}
 		\right), 	
\end{eqnarray*}
\end{example}

The above example also shows that the obtained matrix is not self-adjoint for $Q\geq 5$ and thus certainly not positive semidefinite. Therefore we have the following result
\begin{theorem}
The function $\langle g\rangle (A)$, where $A$ is any solution of $A^2 + A^{-2} + \sqrt Q = 0$ for $Q\geq 5$, is not positive definite on the Thompson group $F$.
\end{theorem}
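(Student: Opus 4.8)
\medskip
\noindent\textbf{Proof proposal.}
The plan is to show that positive definiteness fails already with $r=2$, taking $g_1=e$ and $g_2=\omega^2=(x_1x_0^{-1})^2$, by contradicting the necessary self-adjointness condition recorded in Remark~\ref{fpt}(3), namely $\phi(g^{-1})=\overline{\phi(g)}$. So the whole argument reduces to computing $\langle \omega^2\rangle(A)$ and $\langle\omega^{-2}\rangle(A)$ for the relevant $A$ and checking that they are not complex conjugates.

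First I would record how the Kauffman bracket function behaves under inversion. If $g=g(T_+,T_-)$ with $T_\pm\in\T_n$, then $g^{-1}=g(T_-,T_+)$ has a tree pair with the same number $n$ of leaves, so the normalizing exponent does not change; by Proposition~\ref{link-inverse-element} we have $L_{g^{-1}}=(L_g)^*$, the Kauffman bracket of a mirror image is obtained by the substitution $A\mapsto A^{-1}$, and the factor $(-A^2-A^{-2})^{-n}$ is itself invariant under $A\mapsto A^{-1}$. Hence $\langle g^{-1}\rangle(A)=\langle g\rangle(A^{-1})$. Moreover, for each fixed $g$ the function $\langle g\rangle(A)$ is a Laurent polynomial with integer coefficients divided by $(-A^2-A^{-2})^{n}$, so it has real coefficients and $\overline{\langle g\rangle(A)}=\langle g\rangle(\overline A)$. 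Therefore, were $\langle\,\cdot\,\rangle(A)$ of positive type, one would need $\langle g\rangle(A^{-1})=\langle g\rangle(\overline A)$ for every $g\in F$.

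Next I would specialize to $Q\ge 5$. By the Lemma preceding Theorem~\ref{pos-def-kauff}, the solution $A$ is purely imaginary with $|A|\ne 1$; writing $A=i\lambda$ with $\lambda\in\Reali$, $\lambda\ne 0,\pm1$, one gets $\overline A=-A$ and $A^{-1}=-i/\lambda\ne\overline A$. This by itself is not yet a contradiction, so I would test the identity $\langle g\rangle(A^{-1})=\langle g\rangle(\overline A)$ on $g=\omega^2$. The preceding Example already supplies $\langle\omega^2\rangle(A)=(-A^2-A^{-2})^{-5}(1-A^4+A^{-8})$ and $\langle\omega^{-2}\rangle(A)=(-A^2-A^{-2})^{-5}(1-A^{-4}+A^{8})$. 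For $A=i\lambda$ the quantities $A^{\pm4}=\lambda^{\pm4}$ and $A^{\pm8}=\lambda^{\pm8}$ are real and positive and $(-A^2-A^{-2})^{-5}=(\lambda^2+\lambda^{-2})^{-5}$ is real and nonzero, so both brackets are real, and the self-adjointness requirement would force $1-A^4+A^{-8}=1-A^{-4}+A^{8}$. Setting $u=A^4=\lambda^4>0$ and clearing denominators, this reduces to $u^4+u^3-u-1=0$, i.e.\ $(u+1)(u^3-1)=0$, which forces $u=1$ and hence $|A|=1$, contradicting $|A|\ne1$.

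The only delicate points here are conventions rather than computations: one must make sure that, with the crossing conventions used in this paper, the mirror image of $L_g$ really does correspond to $A\mapsto A^{-1}$ in the normalized bracket — this is exactly Proposition~\ref{link-inverse-element} together with the standard skein behaviour of $\langle\,\cdot\,\rangle$ — and that the $\Gamma$-graph associated with $\omega^2$ has $5$ vertices, matching the Example (so the exponent is $-5$). Once these are in place the conclusion is immediate: the $2\times2$ matrix $\bigl(\langle g_ig_j^{-1}\rangle(A)\bigr)$ for $\{g_1,g_2\}=\{e,\omega^2\}$ is not self-adjoint, hence not positive semi-definite, so $\langle\,\cdot\,\rangle(A)$ is not of positive type on $F$ when $Q\ge5$. (Equivalently, one may just point to the $4\times4$ matrix displayed in the Example, whose $(3,4)$ and $(4,3)$ entries are real and unequal precisely when $|A|\ne1$.)
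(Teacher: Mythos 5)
Your proposal is correct and follows essentially the same route as the paper: the paper simply observes that the $4\times 4$ matrix in the Example is not self-adjoint for $Q\geq 5$ (its $(3,4)$ and $(4,3)$ entries, coming from $\omega^{\pm2}$, are real and unequal when $A$ is purely imaginary with $|A|\neq 1$), hence not positive semi-definite. You add the explicit algebra $(u+1)(u^3-1)=0$ showing those entries coincide only when $|A|=1$, a detail the paper leaves implicit, but the underlying argument is the same.
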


The $F$-index of a link is defined as the smallest number of leaves required for an element of $F$ to give that link (\cite{Jo}, p. 41). In the previous example we showed that the Hopf link and the Trefoil can be obtained with pairs of trees in $\T_5$.  One might wonder whether their  $F$-index is $5$. The following result allow us to give a positive answer to this question.
\begin{proposition}
Every non-trivial knot/link has $F$-index at least equal to $5$.
\end{proposition}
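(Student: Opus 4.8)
The goal is to show every non-trivial knot/link has $F$-index at least $5$, i.e. that the only links arising from pairs $(T_+,T_-)$ with $|\partial T_+| = |\partial T_-| \le 4$ are the unknot and disjoint unions of unknots. The strategy is a finite case analysis: there are very few reduced trees with at most $4$ leaves, hence very few graphs $\Gamma(T_+,T_-)$, and I would simply examine the resulting link diagrams one by one. The key structural facts I would rely on are all stated earlier in the excerpt: (i) the passage from $(T_+,T_-)$ to $\Gamma(T_+,T_-)$ and then to the medial graph and the link diagram $L(T_+,T_-)$; (ii) the fact (Remark after the link correspondence) that $\Gamma_+(T_+)$ and $\Gamma_-(T_-)$ are rooted trees on $n$ vertices glued along their common boundary; and (iii) Proposition~\ref{add-opp-car}, which says adding an opposing pair of carets only adds a distant unknot, so it suffices to treat \emph{reduced} pairs.

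\textbf{Step 1: Enumerate the relevant trees.} For $n=1$ there is one tree; for $n=2$ one tree; for $n=3$ two trees (the ``left caret'' and ``right caret'' configurations); for $n=4$ there are $5$ trees (the Catalan number $C_3 = 5$). Since $\Gamma_\pm$ of a tree on $n$ leaves is a rooted tree on $n$ \emph{vertices}, and the link only depends on the pair of such rooted trees glued along the boundary, the number of graphs $\Gamma(T_+,T_-)$ to inspect (up to the symmetry $(T_+,T_-)\leftrightarrow(T_-,T_+)$, which by Proposition~\ref{link-inverse-element} only mirrors the link) is small: a couple of cases for $n\le 3$ and at most $\binom{5}{2}+5 = 15$ for $n=4$, many of which coincide or reduce.

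\textbf{Step 2: Draw the medial graph and the link for each case.} For each $\Gamma(T_+,T_-)$ I would form the medial graph $\Phi(\Gamma)$ and the link diagram by the rule in Example~\ref{ese-omega2} (crossings $\slashoverback$ over vertices from the upper tree, $\backoverslash$ over the others), then simplify. The point is that a graph on $n\le 4$ vertices that is a union of two trees glued along a boundary has few edges, so the medial graph has few crossings, and each such diagram can be reduced to a trivial diagram by a short sequence of Reidemeister moves. Concretely, many of these graphs are themselves trees or contain a $2$-valent vertex (producing a distant unknot by Proposition~\ref{add-opp-car}-style reasoning or a Reidemeister~I curl), and the remaining low-complexity cases — e.g.\ the graph underlying $\omega^2$ requires $\T_5$, so nothing with $n\le 4$ produces the trefoil — give only trivial links. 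It may be cleanest to observe that with $n\le 4$ the diagram $L(T_+,T_-)$ has at most three crossings, and every knot/link diagram with at most three crossings is either trivial or the Hopf link or the trefoil; then one checks directly that neither the Hopf link nor the trefoil occurs for $n\le 4$ (consistent with the example, which realizes both only in $\T_5$).

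\textbf{Main obstacle.} The conceptual content is slight; the real work is the bookkeeping in Step~2 — correctly listing all reduced pairs with $n\le 4$, drawing each medial graph and link diagram without error, and reducing each one. The subtle point to get right is the distinction between the number of \emph{leaves} of $T_\pm$ and the number of \emph{vertices/edges} of $\Gamma_\pm$, and keeping track of how many crossings the associated diagram actually has (including crossings that cancel in pairs), so that the ``at most three crossings'' bound — or whatever bound one uses — is applied correctly. I would also double-check the two genuinely two-parameter cases ($T_+, T_-$ both in $\T_4$, neither a caret-extension of a smaller pair) by hand, since those are the only places an interesting link could a priori sneak in.
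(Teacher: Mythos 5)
Your plan is essentially the paper's own proof: the paper also reduces, via Propositions~\ref{link-inverse-element} and~\ref{add-opp-car}, to a short finite list of $\Gamma$-graphs coming from reduced pairs with at most $4$ leaves and then checks directly that each associated link is trivial, so your Steps~1--2 (direct enumeration, drawing the medial graphs, and simplifying by Reidemeister moves) match the intended argument and are sound. One caveat: the proposed shortcut ``with $n\le 4$ the diagram $L(T_+,T_-)$ has at most three crossings'' is false. The crossings of $L(T_+,T_-)$ are in bijection with the vertices of the medial graph, i.e.\ with the edges of $\Gamma(T_+,T_-)$, and since $\Gamma_+(T_+)$ and $\Gamma_-(T_-)$ are trees on $n$ vertices glued along their vertex set, the diagram has $2(n-1)$ crossings --- up to $6$ for $n=4$ (and indeed the trefoil example with $n=5$ starts from an $8$-crossing diagram that only becomes $3$-crossing after Reidemeister moves). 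So the ``few crossings, hence trivial or Hopf/trefoil'' argument cannot be invoked at the level of the raw diagrams; you must either carry out the case-by-case Reidemeister reductions as in your primary plan (which is what the paper does), or first simplify each diagram before appealing to any small-crossing classification.
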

\begin{proof}
By symmetry (i.e. taking into account \emph{Propositions} \ref{link-inverse-element} and \ref{add-opp-car}), one has to consider only the following $\Gamma$-graphs
$$
\includegraphics[scale=.5]{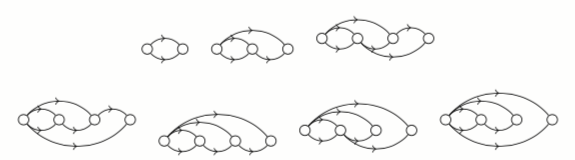}
$$

\noindent It can be easily checked that the associated links are trivial.
\end{proof}

\section{Fox $N$-colouring}
Our aim is to define a function of positive type on $F$ using the number of colourings of a link. First of all we introduce the preliminary definitions.
Let $Q$ be a positive odd integer. The definition $Q$-colouring is a generalization of tricoloring of link (see \cite{encycl}, p. 162, and \cite{P}).
\begin{definition}
Let $L$ be a link and $D$ one of its diagrams. A link is said to be $Q$-colourable if every arc of the diagram is associated to an element of $\mathbb{Z}_Q$ (called colour), such that at each crossing the sum of the colours of the undercrossing is equal to the sum of the overcrossing. The number of $Q$-colourings is a link invariant and is denoted by ${\rm Col}_Q(L)$.
\end{definition}

In order to prove that $Q$-colourings defines a positive definite function on $F$ we introduce the following definition.
Let $g=g(T_+,T_-)\in F$. The associated link may be decomposed in the two upper and lower halves, that is $L(T_+,T_-)=(L_+(T_+),L_-(T_-))$. We may call these halves  as \emph{semi-links} \footnote{They are not links because they are not closed.}.

We may represent  these  semi-links associated to $(T_+,T_-)$ by the following tangles
$$
\includegraphics[scale=.5]{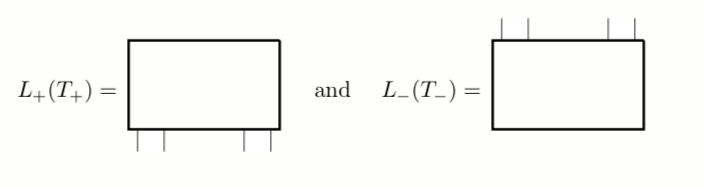}
$$
Consider a pair of bifurcating trees $(T_+,T_-)$ with $n$-leaves. The link $L(T_+,T_-)$ may be seen as the \emph{n-sum of the semi-links} which is defined by joining the corresponding strings in the rectangles. We denote the $n$-sum of semi-links by the symbol $\#_n$. 
\begin{lemma}
	Consider  $T_+, T_-\in\T_n$ and let $T'_+$, $T'_-$ be the trees obtained by adding a pair of opposite carets. Then 
	$$ 
		{\rm Col}_Q(L(T'_+,T'_+))=Q \cdot {\rm Col}_Q(L(T_+,T_+)).
	$$
\end{lemma}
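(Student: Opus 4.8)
The plan is to mimic the arguments already used for the chromatic and Tutte polynomials, exploiting the fact that adding an opposing pair of carets corresponds (by \emph{Proposition} \ref{add-opp-car}) to adding a distant unknot $O$ to the link diagram $L(T_+,T_-)$. So the real content is: \emph{adding a disjoint unknot multiplies the number of $Q$-colourings by $Q$}. First I would recall that in the diagram $L(T'_+,T'_-)$ the extra circle is a separate component carrying a single arc, disjoint from everything else, with no crossings involving it. Hence any valid $Q$-colouring of $L(T'_+,T'_-)$ restricts to a valid $Q$-colouring of $L(T_+,T_-)$ together with an arbitrary choice of colour in $\mathbb{Z}_Q$ for the new arc; the colouring conditions at the crossings of $L(T_+,T_-)$ are unchanged because the new circle contributes no crossings. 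Conversely, any colouring of $L(T_+,T_-)$ extends in exactly $Q$ ways. This sets up a bijection ${\rm Col}_Q(L(T'_+,T'_-)) \leftrightarrow {\rm Col}_Q(L(T_+,T_-)) \times \mathbb{Z}_Q$, giving the claimed identity.

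More precisely, I would phrase the crossing condition carefully: at a crossing, the two undercrossing arc-colours sum to twice the overcrossing arc-colour, equivalently $a + b \equiv 2c \pmod Q$. (For $Q$ odd this is the usual condition, reducing to Fox tricolouring when $Q=3$.) Since the new unknot created by the opposing carets is a distant, crossing-free circle by \emph{Proposition} \ref{add-opp-car}(2), none of these equations is affected, and exactly one new free variable in $\mathbb{Z}_Q$ is introduced. I should note that the statement in the excerpt is written with $L(T'_+,T'_+)$ and $L(T_+,T_+)$, i.e. the relevant case where the two trees coincide up to the added caret — which is the diagonal case needed to check well-definedness of the normalisation; the same argument applies verbatim to any matched pair $(T'_+,T'_-)$.

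The main obstacle, such as it is, is purely bookkeeping: one must be sure that adding the opposing carets really does produce a \emph{distant} unknot (no linking, no crossings) rather than, say, a small kinked circle entangled with the rest — but this is exactly the content of \emph{Proposition} \ref{add-opp-car}, which I am entitled to assume, so there is nothing deep here. I would also remark, for completeness, that ${\rm Col}_Q$ of a link is unchanged under Reidemeister moves, so it does not matter which diagram of $L(T_+,T_-)$ we use; this invariance is already quoted in the definition preceding the lemma. With these remarks the proof is a one-line counting argument: every $Q$-colouring of the bigger diagram is a $Q$-colouring of the smaller one paired with a free colour on the extra circle, whence ${\rm Col}_Q(L(T'_+,T'_+)) = Q\cdot {\rm Col}_Q(L(T_+,T_+))$.
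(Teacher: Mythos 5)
Your proof is correct and follows essentially the same route as the paper: invoke \emph{Proposition} \ref{add-opp-car} to see that the new pair of carets adds only a distant unknot, whose single crossing-free arc can be coloured freely in $Q$ ways, giving the factor $Q$. The paper's own proof is just this observation stated in one line; you merely spell out the counting step that the paper leaves implicit.
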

\begin{proof}
The thesis follows from the fact that by \emph{Proposition} \ref{add-opp-car} we know that $L(T'_+,T'_+)=O\cup L(T_+,T_+)$. 
\end{proof}

Consider the colouring function
$$
	{\rm Col}_Q(g)\doteq {\rm Col}_Q(L(T_1,T_2)) Q^{-n}
$$
where $T_1, T_2\in \T_n$ and $g=g(T_1,T_2)\in F$. The above lemma implies that this function is well defined.

We want to introduce a partition function for the $Q$-colorings and we will recall some results of de la Harpe and Jones (see \cite{DLH-J}, p. 219). A link diagram is $4$-regular graph, where the vertices are the crossings. For each vertex $x$ we denote by $(a_1^x,a_2^x)$ the upper string and by  $(a_1^{-x},a_2^{-x})$ the lower string. We set
\begin{eqnarray*}
		  w(s^+,s^-,t^+,t^-)\doteq \left\{
 		\begin{array}{ll}
 		1 & \textrm{ if $s^+=t^+$ and $s^++t^+=s^-+t^-$}  \\ 
 		0   & \textrm{ otherwise.}
 		\end{array}
 		\right.	
\end{eqnarray*}
Consider the partition function defined by 
$$
	Z_{G}(Q)\doteq\sum_\tau \prod_{x\in V(G)} w(\tau(a_1^x),\tau(a_1^{-x}),\tau(a_2^{x}),\tau(a_2^{-x})) \ ,
$$
where the sum over the functions $\tau: E(G)\to \mathbb{Z}_Q$. Any such function is called \emph{state} of the graph $G$. 
We have that ${\rm Col}_Q(L)=Z_{L}(Q)$ (\cite{P}, \emph{Example} 4.1, p. 22).
 
\begin{theorem}
The function ${\rm Col}_Q(g)$, where $Q$ is any positive odd integer, is of positive type on $F$.
\end{theorem}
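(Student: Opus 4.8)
The plan is to mimic exactly the argument used for the chromatic polynomial, the Tutte polynomial and the Kauffman bracket, since the structure is identical: reduce to a Gram matrix and invoke \emph{Lemma} \ref{inner-prod-matrix}. First I would normalize in the usual way: given $g_1,\dots,g_r\in F$, after inserting opposing carets I may assume $g_i=g(T^i_+,T^i_-)$ with all $T^i_-$ equal to a common tree and $T^i_\pm\in\T_n$, so that $g_ig_j^{-1}=g(T^i_+,T^j_+)$. Using the normalization lemma just proved, positive-definiteness of ${\rm Col}_Q(g)$ is equivalent to positive semi-definiteness of the matrix $\big({\rm Col}_Q(L(T^i_+,T^j_+))\big)_{i,j=1}^r$, since the factor $Q^{-n}$ is a fixed positive scalar independent of $(i,j)$.

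Next I would use the partition-function description ${\rm Col}_Q(L)=Z_L(Q)$ together with the fact that the link diagram $L(T_+,T_-)$ is built by gluing the two semi-links $L_+(T_+)$ and $L_-(T_-)$ along $n$ strands (the $\#_n$ operation). The key observation is that the crossings of $L(T_+,T_-)$ split into those coming from the upper tree and those coming from the lower tree, and at the gluing level the state $\tau$ is determined by its values on the $n$ middle strands. Hence the sum over states factorizes: fixing a colouring $\sigma=(\sigma_1,\dots,\sigma_n)\in\mathbb{Z}_Q^n$ of the middle strands, the number of ways to extend it through the upper tangle depends only on $T_+$ and $\sigma$, giving a nonnegative integer $f_{T_+}(\sigma)$; similarly one gets $f_{T_-}(\sigma)$ for the lower tangle. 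Then
$$
	{\rm Col}_Q(L(T_+,T_-)) = \sum_{\sigma\in\mathbb{Z}_Q^n} f_{T_+}(\sigma)\, f_{T_-}(\sigma).
$$
I would define $v_{T_+}\in\H=\mathbb{C}^{Q^n}$ to be the vector with $\sigma$-component $f_{T_+}(\sigma)$ (which is real, indeed a nonnegative integer), and likewise $v_{T_-}$. Because $w$ takes only the values $0$ and $1$ and the colour relations at a crossing are symmetric under swapping over/under strands, the contribution of the lower tangle is genuinely the complex conjugate (here just equality, since everything is real) of an expression of the same form, so ${\rm Col}_Q(L(T^i_+,T^j_+))=\langle v_{T^i_+},v_{T^j_+}\rangle$. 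Applying \emph{Lemma} \ref{inner-prod-matrix} finishes the proof.

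The main obstacle I anticipate is the factorization step: one must argue carefully that the state space of the glued diagram really decomposes as a fibred product over the $n$ middle edges, i.e. that a $\mathbb{Z}_Q$-colouring of $L(T_+,T_-)$ is precisely a pair consisting of a colouring of the upper semi-link tangle and a colouring of the lower semi-link tangle agreeing on the $n$ boundary strands. This requires knowing that the arcs of the diagram $L(T_+,T_-)$ do not run from the upper half to the lower half except through those $n$ boundary strands, which follows from Jones' construction (the medial graph of $\Gamma(T_+,T_-)=\Gamma_+(T_+)\cup_\partial\Gamma_-(T_-)$) but should be stated explicitly. One subtlety worth checking is whether the boundary strands should be counted with the correct multiplicity in the normalization $Q^{-n}$; this is exactly what the preceding lemma on adding opposing carets pins down, so it should be consistent. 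Everything else (orthogonality of the standard basis of $\mathbb{C}^{Q^n}$, reality of the weights, positivity of $Q^{-n}$) is routine and parallels the earlier sections verbatim.
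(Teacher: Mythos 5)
Your proposal is correct and follows essentially the same route as the paper: after arranging a common bottom tree, both arguments factor ${\rm Col}_Q(L(T^i_+,T^j_+))$ over the colours of the strands cut by the horizontal axis, recognize the resulting matrix as a Gram matrix, and invoke \emph{Lemma} \ref{inner-prod-matrix}, the only (harmless) bookkeeping discrepancy being that the diagram meets the axis in $2n$ points, so the paper works in $\mathbb{C}^{Q^{2n}}$ rather than $\mathbb{C}^{Q^{n}}$. One justification is imprecise: the colouring relation at a crossing is \emph{not} symmetric under exchanging over- and under-strands; the upper and lower extension counts for the same tree agree rather because $L_-(T)$ is the planar reflection of $L_+(T)$ across the axis (which fixes the boundary points, so the same boundary colouring is induced), a point that the paper's own proof also leaves implicit.
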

\begin{proof}
We can suppose that the $g_i=g(T_+^i,T_-^i)$  with $T_\pm^i\in \T_n$ for $i=1, \cdots, r$ and $g_ig_j^{-1}=g(T^i_+,T^j_+)$. Therefore, it is enough to consider  $\Big({\rm Col}_Q(L(T^i_+,T^j_+)) /Q^{n}\Big)_{i,j=1}^r$ and prove that $({\rm Col}_Q(L(T^i_+,T^j_+)))_{i,j=1}^r$ is positive semidefinite. Our idea is to use the partition function in order to prove the claim. 
Therefore
\begin{eqnarray*}
	{\rm Col}_Q(L(T^i_+,T^j_+)) &=& Z_{(L(T_+, T_-))}=\\
	&=& \sum_\tau \prod_{x\in V(L(T_+, T_-))} w(\tau(a_1^x),\tau(a_1^{-x}),\tau(a_2^{x}),\tau(a_2^{-x}))=\\
	&=&  \sum_{\tau_0} \left(\sum_{\tau_+} \prod_{x\in V(\vec{L}(T_+))} w(\tau(a_1^x),\tau(a_1^{-x}),\tau(a_2^{x}),\tau(a_2^{-x}))\right) \\
	& & \times   \left(\sum_{\tau_-} \prod_{x\in V(\vec{L}(T_-))} w(\tau(a_1^x),\tau(a_1^{-x}),\tau(a_2^{x}),\tau(a_2^{-x})) \right)\\
\end{eqnarray*}
where we have decomposed each state $\tau$ as $(\tau_0,\tau_+,\tau_-)$, $\tau_0$ being a function on edges in common between the semi-links, $\tau_+$ a function on the the remaining edges in $\vec{L}(T_+)$ and $\tau_-$ a function on the the remaining edges in $\vec{L}(T_-)$.
For any $\tau_0=(\tau_1,\cdots ,\tau_{2n})$, the expression $\sum_{\tau_+} \prod_{x\in V(\vec{L}(T_+))} w(\tau(a_1^x),\tau(a_1^{-x}),\tau(a_2^{x}),\tau(a_2^{-x}))$ defines the $\tau_0$-th component of a vector $v_{T_+}$ in $\H=\mathbb{C}^{3^{2n}}$. The choice of the vector space $\mathbb{C}^{3}$ is due to the three different values taken by the function $w(\cdot)$. 
Thus, we may define a vector $v_{T_+^i}$ and similarly a vector $v_{T_+^j}$ such that 
$$
	Z_{(L(T_+, T_-))}=\langle v_{T_+^i},v_{T_+^j}\rangle.
$$
It follows by \emph{Lemma} \ref{inner-prod-matrix} that the matrix $({\rm Col}_Q(L(T^i_+,T^j_+)))_{i,j=1}^r$ is positive semidefinite for any $r$, i.e. the function ${\rm Col}_Q(g)$ is of positive type.
\end{proof}

It is well-known that a  positive type function on a discrete group $G$ gives rise to a completely positive multiplier on the reduced $C^*$-algebra $C^*_r(G)$. 
One might investigate whether these or similar methods can be  exploited to say anything of interest  for other kind of multipliers on $F$.

\bigskip
{\parindent=0pt %Addresses of the authors:\\

\smallskip Valeriano Aiello\\ 
Section de Math\'ematiques \\
Universit\'e de Gen\`eve \\
2-4 rue du Li\`evre, Case Postale 64, 1211 Gen\`eve 4, Switzerland\\
E-mail: valerianoaiello@gmail.com\\

\smallskip \noindent
Roberto Conti\\
Dipartimento di Scienze di Base e Applicate per l'Ingegneria \\
Sapienza Universit\`a di Roma \\
Via A. Scarpa 16, I-00161 Roma, Italy
\\ E-mail: roberto.conti@sbai.uniroma1.it
\par}

\end{document}